\documentclass{amsart}

\usepackage{amsmath,amssymb,amsthm,mathtools}
\usepackage{microtype}
\usepackage[hidelinks]{hyperref}
\usepackage{enumitem}

\newtheorem{theorem}{Theorem}[section]
\newtheorem{proposition}[theorem]{Proposition}
\newtheorem{lemma}[theorem]{Lemma}

\theoremstyle{remark}
\newtheorem{remark}[theorem]{Remark}

\newcommand{\Hh}{\mathfrak H}
\newcommand{\Hone}{\mathfrak H_1}
\newcommand{\E}{\mathbb E}
\newcommand{\Pp}{\mathbb P}
\newcommand{\R}{\mathbb R}
\newcommand{\ip}[2]{\left\langle #1,#2\right\rangle}
\newcommand{\norm}[1]{\left\lVert #1\right\rVert}
\newcommand{\lawge}{\mathrel{\overset{\mathrm{law}}{\ge}}}

\title[Malliavin Smoothness of the Third-Order Hermite Process]
{Malliavin Smoothness of the Third-Order Hermite Process}

\author{Elina Moldavskaya}
\address{Technion---Israel Institute of Technology, Israel}
\email{elina.mol@technion.ac.il}

\subjclass[2020]{60H07, 60G22, 60G18}
\keywords{Hermite process, third Wiener chaos, Malliavin calculus, nondegeneracy, negative moments, quadratic Gaussian forms, Schwartz density}

\begin{document}

\begin{abstract}
We prove Malliavin nondegeneracy for the third-order Hermite process. The key step is to show that, for every nonzero test direction $h\in C_c^\infty(0,1)$, the directional Malliavin derivative of a third-order Hermite random variable is an infinite-rank Gaussian quadratic form. Using arbitrarily large orthonormal families of such directions, we derive a self-contained Fourier bound for their joint characteristic function and obtain polynomial small-ball estimates of arbitrary order for the Malliavin norm. This yields negative moments of every order. Combining the one-time estimate with determinant factorization and Malliavin strong local nondeterminism, we obtain negative moments of all orders for finite-dimensional Malliavin determinants. Consequently, all finite-dimensional distributions, as well as arbitrary vectors of non-overlapping increments, admit Schwartz densities. We further establish grid-uniform Sobolev bounds for inverse Malliavin determinants of normalized increment vectors and derive stretched-exponential estimates for all partial derivatives of their densities. The decay exponent is $2/3$, reflecting the third Wiener chaos. This settles the next non-Gaussian Hermite order after the Rosenblatt case and provides a quantitative counterpart to the order-two smoothness theory.
\end{abstract}

\maketitle

\section{Introduction}

Hermite processes are canonical non-Gaussian self-similar processes with stationary increments arising in non-central limit theorems for nonlinear functionals of long-range dependent Gaussian sequences. The process of order one is fractional Brownian motion, while order two gives the Rosenblatt process. Higher orders provide the natural non-Gaussian limits associated with larger Hermite ranks; see, among others, \cite{DobrushinMajor1979,Taqqu1979,PipirasTaqqu2017,Tudor2023}.

A basic regularity question for these processes concerns the existence and smoothness of their finite-dimensional densities. For a random vector $F=(F_1,\ldots,F_m)$ with Malliavin differentiable components, absolute continuity follows from almost sure invertibility of its Malliavin matrix by the Bouleau--Hirsch criterion. Smoothness is substantially stronger: one needs inverse moments of the determinant of the Malliavin matrix. If the components belong to $\mathbb D^\infty$ and
\[
 (\det \Gamma_F)^{-1}\in \bigcap_{p\ge1}L^p(\Omega),
\]
then the density is smooth; when the components also have moments of every order, the density belongs to the Schwartz class; see, for example, \cite{Nualart2006}.

Very recently, Loosveldt, Nachit, Nourdin and Tudor proved absolute continuity of finite-dimensional distributions for Hermite processes of arbitrary order \cite{LoosveldtEtAlAC2026}. Their argument combines a factorization of the Malliavin determinant with strong local nondeterminism formulated at the level of Malliavin derivatives. In Remark~7.1 of \cite{LoosveldtEtAlAC2026}, the authors point out that almost sure positivity of the Malliavin determinant does not provide smoothness and that the natural next question is whether their strategy can be strengthened to obtain negative moments of the determinant.

For the Rosenblatt process, corresponding to Hermite order two, Loosveldt, Nachit, Nourdin and Tudor subsequently answered this question affirmatively \cite{LoosveldtEtAlRosenblatt}. Their proof exploits the special structure of the second Wiener chaos: the Malliavin derivative of a Rosenblatt random variable is Gaussian, so its squared norm can be represented through a Karhunen--Lo\`eve expansion and controlled by weighted chi-square variables. Negative moments for Malliavin derivatives of general second-chaos variables are also known; see \cite[Lemma~7.1]{HuLuNualart2014}.

The distributional theory of vectors of Gaussian quadratic forms is developed, for example, in \cite{BogachevKosovNourdinPoly2015}, while general absolute-continuity criteria for finite vectors on Wiener chaos are given in \cite{NourdinNualartPoly2013}. A related but distinct line of work concerns regularization inside the third Wiener chaos near a Gaussian limit. Poly \cite{Poly2019} obtains improved Carbery--Wright-type nondegeneracy estimates along central convergence by encoding a Malliavin gradient through a Gaussian random matrix. Those estimates provide a fixed improvement of the small-ball exponent, but do not yield negative moments of every order for the fixed non-Gaussian Hermite law considered here.

The purpose of the present paper is to settle the next Hermite order. Our main result is that the third-order Hermite process is Malliavin nondegenerate in every finite dimension. In particular, all finite-dimensional distributions have Schwartz densities.

The mechanism is specific to order three and differs from the Rosenblatt argument. If $F$ is a third-order Hermite random variable and $h\in C_c^\infty(0,1)$ is deterministic, then
\[
 D_hF\in \mathcal H_2,
\]
so $D_hF$ is a Gaussian quadratic form. We prove that every nonzero such direction gives an infinite-rank form. For any finite orthonormal family $h_1,\ldots,h_m$, compactness of the unit sphere and continuity of singular values then give a uniform lower bound on a fixed singular value of the operators associated with
\[
 D_{\sum_i u_i h_i}F,\qquad u\in\mathbb S^{m-1}.
\]
The explicit characteristic function of a second-chaos variable therefore yields an integrable bound for the joint characteristic function of $(D_{h_1}F,\ldots,D_{h_m}F)$. Fourier inversion gives a bounded joint density and hence
\[
 \Pp\big\{\norm{DF}_{L^2([0,1])}\le \varepsilon\big\}
 \le C_m\varepsilon^m
 \qquad\text{for every }m\ge1.
\]
This self-contained argument is quantitatively stronger than the mere existence of a joint density. It also agrees with the general infinite-rank quadratic-form criterion of Bogachev \cite[Section~5]{Bogachev2016}. The small-ball estimate immediately yields all negative moments of the Malliavin norm at time one. The strong local nondeterminism theorem of \cite{LoosveldtEtAlAC2026} then transfers these inverse-moment estimates to the conditional factors in the Gram determinant of an arbitrary finite-dimensional Malliavin matrix.

The argument highlights why Hermite order three is a distinguished case. For order two, directional Malliavin derivatives are Gaussian. For order three, they are quadratic Gaussian forms, and their second-chaos spectral structure can be exploited directly. Starting from order four, directional derivatives live in Wiener chaoses of order at least three, and the present quadratic-form argument no longer applies directly.

Beyond qualitative smoothness, we obtain a quantitative estimate for increment densities. If $0=t_0<t_1<\cdots<t_m$, $\Delta_j=t_j-t_{j-1}$, and $p_\pi$ denotes the density of the successive increment vector, then for every multi-index $\mathbf n=(n_1,\ldots,n_m)$,
\[
 |\partial^{\mathbf n}p_\pi(x)|
 \le C\prod_{j=1}^m\Delta_j^{-H(1+n_j)}
 \exp\left\{-c\sum_{j=1}^m\left(\frac{x_j}{\Delta_j^H}\right)^{2/3}\right\}
\]
whenever $x_j\ge2\Delta_j^H$. The power $2/3$ is the fixed-chaos tail exponent for the third Wiener chaos. Thus the qualitative nondegeneracy result also yields a grid-uniform analogue of the density-derivative estimates recently obtained for the Rosenblatt process, with the exponential tail replaced by the natural third-chaos stretched exponential.

The paper is organized as follows. Section~\ref{sec:prelim} recalls the Hermite-process representation and the Malliavin tools needed later. In Section~\ref{sec:directional} we prove the infinite-rank property for every nonzero direction in $C_c^\infty(0,1)$. Section~\ref{sec:negative} establishes all negative moments of the Malliavin derivative at time one. Section~\ref{sec:fdd} combines this estimate with the determinant factorization and Malliavin strong local nondeterminism from \cite{LoosveldtEtAlAC2026} to obtain Schwartz densities of all finite-dimensional distributions. Section~\ref{sec:quantitative} gives grid-uniform Sobolev estimates for inverse Malliavin determinants and stretched-exponential bounds for derivatives of increment densities.

\section{Preliminaries and main results}\label{sec:prelim}

Let $B=(B(h),h\in\Hh)$ be an isonormal Gaussian process over
\[
 \Hh=L^2(\R).
\]
We denote by $I_q$ the multiple Wiener--It\^o integral of order $q$ with respect to $B$, and by $D$ the Malliavin derivative. For $F=I_q(f)$ with symmetric $f\in \Hh^{\odot q}$,
\[
 D_rF=qI_{q-1}(f(\cdot,r)).
\]
All standard Malliavin notation follows \cite{Nualart2006}. We write
\[
 \mathbb D^\infty:=\bigcap_{k\ge1}\bigcap_{p\ge1}\mathbb D^{k,p}.
\]
For $k\ge1$ and $p\ge1$, the Sobolev norm on $\mathbb D^{k,p}$ is
\[
 \|F\|_{k,p}^p:=\E[|F|^p]+\sum_{j=1}^k\E\big[\|D^jF\|_{\Hh^{\otimes j}}^p\big],
\]
and $\|\cdot\|_{0,p}$ denotes the norm of $L^p(\Omega)$.
For a random vector $X=(X_1,\ldots,X_n)$ with components in $\mathbb D^{1,2}$, its Malliavin matrix is
\[
 \Gamma_X:=\big(\ip{DX_i}{DX_j}_{\Hh}\big)_{1\le i,j\le n}.
\]
We call $X$ Malliavin nondegenerate if $\det\Gamma_X>0$ almost surely and
\[
 (\det\Gamma_X)^{-1}\in\bigcap_{p\ge1}L^p(\Omega).
\]
The Schwartz space $\mathcal S(\R^n)$ consists of all $C^\infty$ functions whose derivatives decay faster than every inverse polynomial.

Let $Z^{H,3}=(Z_t^{H,3})_{t\in\R}$ be the Hermite process of order three and self-similarity index $H\in(1/2,1)$. Following the normalization of \cite{LoosveldtEtAlAC2026}, write
\[
 H_0=1+\frac{H-1}{3}\in\left(\frac56,1\right),
\]
and
\[
 Z_t^{H,3}=I_3(L_t^{H,3}),
\]
where
\begin{equation}\label{eq:HermiteKernel}
 L_t^{H,3}(\xi_1,\xi_2,\xi_3)
 =c(H,3)\int_0^t\prod_{j=1}^3
 (s-\xi_j)_+^{H_0-3/2}\,ds.
\end{equation}
The positive constant $c(H,3)$ is chosen so that $\E[(Z_1^{H,3})^2]=1$.

For the one-time analysis it is convenient to introduce
\begin{equation}\label{eq:alpha-beta}
 \alpha:=2-2H_0=\frac{2(1-H)}{3}\in\left(0,\frac13\right),
 \qquad
 \beta:=\frac{1+\alpha}{2}=\frac32-H_0\in\left(\frac12,\frac23\right).
\end{equation}
Then
\[
 (s-x)_+^{H_0-3/2}=(s-x)_+^{-\beta}.
\]
We write
\[
 F:=Z_1^{H,3}=I_3(f_\alpha),
\]
with
\begin{equation}\label{eq:falpha}
 f_\alpha(x_1,x_2,x_3)
 =c_H\int_0^1\prod_{j=1}^3(s-x_j)_+^{-\beta}\,ds,
\end{equation}
where $c_H=c(H,3)>0$.

The condition $H>1/2$ enters here exactly through square integrability. Indeed, using \eqref{eq:betaidentity} below,
$$
 \norm{f_\alpha}_{L^2(\R^3)}^2
 =c_H^2 b_\alpha^3
   \int_0^1\int_0^1 |s-t|^{-3\alpha}\,ds\,dt
 =c_H^2 b_\alpha^3
   \frac{2}{(1-3\alpha)(2-3\alpha)}<\infty,
$$
because $3\alpha<1$.

We shall use the restricted Hilbert space
\[
 \Hone:=L^2([0,1])
\]
and the restricted Malliavin norm
\[
 \norm{DF}_{\Hone}^2:=\int_0^1|D_rF|^2\,dr.
\]
This is the same restricted norm that appears on the right-hand side of the Malliavin strong local nondeterminism estimate in \cite[Theorem~6.1]{LoosveldtEtAlAC2026}.

Our one-time result is the following.

\begin{theorem}[Negative moments at order three]\label{thm:negative-main}
For every $p>0$,
\begin{equation}\label{eq:negative-main}
 \E\big[\norm{DF}_{\Hone}^{-p}\big]<\infty.
\end{equation}
More precisely, for every integer $m\ge1$ there exists $C_m<\infty$ such that
\begin{equation}\label{eq:small-ball-main}
 \Pp\{\norm{DF}_{\Hone}\le\varepsilon\}
 \le C_m\varepsilon^m,
 \qquad \varepsilon>0.
\end{equation}
\end{theorem}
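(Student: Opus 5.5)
The strategy follows the one announced in the introduction. Fix $m\ge1$ and an orthonormal family $h_1,\ldots,h_m\in C_c^\infty(0,1)\subset\Hone$. By Bessel's inequality,
\[
 \norm{DF}_{\Hone}^2\ge\sum_{i=1}^m |D_{h_i}F|^2,
 \qquad D_{h}F:=\ip{DF}{h}_{\Hone}.
\]
It therefore suffices to prove that the random vector $Y=(D_{h_1}F,\ldots,D_{h_m}F)$ has a bounded density on $\R^m$. Given this,
\[
 \Pp\{\norm{DF}_{\Hone}\le\varepsilon\}\le\Pp\{|Y|\le\varepsilon\}\le \norm{p_Y}_\infty\,|B_m(0,\varepsilon)|=C_m\varepsilon^m .
\]
The negative moments \eqref{eq:negative-main} then follow from the layer-cake formula: $\E\norm{DF}^{-p}=\int_0^\infty p\varepsilon^{-p-1}\Pp\{\norm{DF}\le\varepsilon\}\,d\varepsilon$. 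The integral over $[1,\infty)$ is finite trivially, and the integral over $(0,1]$ is finite once we choose $m>p$.

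To bound the density, compute $D_hF=3I_2(g_h)$ with $g_h(x,y)=\int_0^1 f_\alpha(x,y,r)h(r)\,dr$. This is a second-chaos variable, with associated Hilbert--Schmidt operator $A_h$ on $L^2(\R)$ having eigenvalues $\lambda_k(h)$. For $u\in\mathbb S^{m-1}$ we have $\sum_i u_iD_{h_i}F=D_{h_u}F$ with $h_u=\sum_iu_ih_i$. The standard formula gives
\[
 |\E e^{\mathrm{i}t D_{h_u}F}|=\prod_k(1+36t^2\lambda_k(h_u)^2)^{-1/4}.
\]
If we know that $A_{h_u}$ has at least $N$ singular values bounded below by some $\delta>0$ uniformly in $u$, then $|\varphi_Y(tu)|\le(1+36\delta^2t^2)^{-N/4}$. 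Taking $N>2m$ makes $\varphi_Y$ integrable in polar coordinates, since $\int_0^\infty t^{m-1}(1+ct^2)^{-N/4}dt<\infty$. Fourier inversion then yields $\norm{p_Y}_\infty\le(2\pi)^{-m}\norm{\varphi_Y}_{L^1}<\infty$.

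The uniformity is handled by compactness. The map $u\mapsto A_{h_u}$ is linear, hence continuous into the Hilbert--Schmidt (and so operator-norm) topology. The $N$-th singular value $s_N(A_{h_u})$ is $1$-Lipschitz in operator norm, so it is continuous on the compact sphere $\mathbb S^{m-1}$. It is therefore bounded below by a positive $\delta$ as soon as it is positive at every point. Because every $h_u$ is a nonzero element of $C_c^\infty(0,1)$, the whole argument reduces to the following claim: \emph{for every nonzero $h\in C_c^\infty(0,1)$, the operator $A_h$ has infinite rank.}

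I expect this infinite-rank claim to be the main obstacle. To attack it, write $g_h(x,y)=c_H\int_0^1(s-x)_+^{-\beta}(s-y)_+^{-\beta}\psi(s)\,ds$, where $\psi(s)=\int h(r)(s-r)_+^{-\beta}dr$ is a Riemann--Liouville fractional integral of $h$. Since fractional integration is injective, $\psi\not\equiv0$. Then $A_h=T^*M_\psi T$, where $T\phi(s)=\int(s-x)_+^{-\beta}\phi(x)\,dx$ maps $L^2(\R)$ into functions on $[0,1]$ and $M_\psi$ is multiplication by $\psi$. The range of $T$ restricted to $L^2(-\infty,1)$ is dense in $L^2([0,1])$. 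This should follow from the injectivity of the adjoint, which is again a fractional integral, or alternatively from Fourier/Mellin arguments. Hence $\mathrm{rank}(A_h)\ge\mathrm{rank}(M_\psi)$ on a suitable space. The multiplication operator $M_\psi$ has infinite rank because $\psi$ is nonzero on a set of positive measure, which carries infinitely many orthogonal functions.

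Some care is needed in this step. $M_\psi$ is not positive, so finite-rank cancellation in $T^*M_\psi T$ must be excluded. I would first try the following argument. Restrict to test functions $\phi_j$ for which the $T\phi_j$ approximate orthogonal bumps $e_j$ supported where $\psi$ has constant sign and $|\psi|\ge\eta$. The Gram matrix $\ip{A_h\phi_i}{\phi_j}=\ip{\psi T\phi_i}{T\phi_j}$ is then close to a nonsingular diagonal matrix, so it has rank $N$ for every $N$. Density of the range of $T$ is the delicate input here, and I would establish it by showing that $\int_0^1 (s-x)_+^{-\beta}g(s)\,ds=0$ for all $x<1$ forces $g=0$, using the injectivity of the Weyl fractional integral.
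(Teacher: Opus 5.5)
Your proposal is correct and follows the paper's proof essentially step for step: Bessel's inequality, the product formula for the characteristic function of $D_{h_u}F$, compactness of $\mathbb S^{m-1}$ with Lipschitz continuity of $s_N$, and the factorization $A_h=c_H U_\alpha M_{a_h}U_\alpha^*$ (your $T$ is $U_\alpha^*$ and your $\psi$ is $a_h$). The only differences are in the infinite-rank step. You obtain injectivity of $U_\alpha$ from the right-sided Riemann--Liouville integral rather than from the Fourier form of the Riesz energy, and you transfer infinite rank by a Gram-matrix perturbation rather than the paper's closed-range argument; the latter shows that your sign-cancellation worry is already settled by injectivity of $U_\alpha$ alone.
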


The finite-dimensional consequence is stated next. If $\mathbf Z=(Z^{H,3}_{t_1},\ldots,Z^{H,3}_{t_n})$, denote by $\Gamma_{\mathbf Z}$ its Malliavin matrix.

\begin{theorem}[Malliavin nondegeneracy]\label{thm:fdd-main}
Let $n\ge1$ and let $t_1,\ldots,t_n$ be distinct positive times. Then, for every $p>0$,
\begin{equation}\label{eq:det-negative}
 \E\big[(\det\Gamma_{\mathbf Z})^{-p}\big]<\infty.
\end{equation}
Consequently, the law of $\mathbf Z$ admits a density in the Schwartz space $\mathcal S(\R^n)$. The same conclusion holds for every finite vector of pairwise non-overlapping increments.
\end{theorem}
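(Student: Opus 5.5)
The plan is to follow the determinant-factorization strategy of \cite{LoosveldtEtAlAC2026} and to replace their almost-sure positivity of each factor by the quantitative inverse moments of Theorem~\ref{thm:negative-main}.

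First, reduce to increments. Order the times as $0<t_1<\cdots<t_n$ and set $\Delta_j=t_j-t_{j-1}$ with $t_0=0$. The map from $\mathbf Z$ to the increment vector $\mathbf Y=(Z_{t_j}-Z_{t_{j-1}})_j$ is linear with determinant one. Hence $\Gamma_{\mathbf Z}$ and $\Gamma_{\mathbf Y}$ are congruent under a unimodular matrix, and $\det\Gamma_{\mathbf Z}=\det\Gamma_{\mathbf Y}$. The same reduction, applied to a general family of pairwise non-overlapping increments with the gaps in between discarded, covers the last sentence of the theorem, since discarding coordinates only enlarges the relevant conditional norms. Next, $\Gamma_{\mathbf Y}$ is the Gram matrix of $DY_1,\ldots,DY_n$ in $\Hh$. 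The Gram determinant factorizes as $\prod_j \mathrm{dist}(DY_j,\mathrm{span}\{DY_i:i<j\})^2$, where the distance is taken in $\Hh$, and each factor is at least the corresponding distance computed after restricting to $L^2([t_{j-1},t_j])$. This is the factorization used in \cite{LoosveldtEtAlAC2026}.

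The key input is the Malliavin strong local nondeterminism of \cite[Theorem~6.1]{LoosveldtEtAlAC2026}. As I read it, it bounds each conditional factor from below by a constant times $\Delta_j^{2H}$ times the restricted norm $\norm{DF^{(j)}}^2_{\Hone}$. Here $F^{(j)}$ is a copy of $F=Z_1^{H,3}$, obtained from the rescaled and shifted increment on $[t_{j-1},t_j]$ by stationarity of increments and self-similarity; the shift and scaling act on $\Hh$ as unitary maps, so the Malliavin norms transform covariantly. If the theorem is instead formulated with a random or joint lower bound, I will use it in exactly that form. Then
\[
\det\Gamma_{\mathbf Y}\ \ge\ c\prod_{j=1}^n\Delta_j^{2H}\,\norm{DF^{(j)}}_{\Hone}^2 .
\]
By H\"older's inequality with exponents $n$, together with the equality in law of each $F^{(j)}$ with $F$,
\[
\E[(\det\Gamma_{\mathbf Y})^{-p}]\le C\prod_j\Big(\E\norm{DF}_{\Hone}^{-2pn}\Big)^{1/n},
\]
and the right-hand side is finite by Theorem~\ref{thm:negative-main}. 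No independence between the factors is needed.

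Finally, the Schwartz conclusion follows from the standard criterion \cite{Nualart2006}. Each $Z_t$ lies in the third chaos, so it belongs to $\mathbb D^\infty$ with moments of every order by hypercontractivity. Combined with $(\det\Gamma_{\mathbf Z})^{-1}\in\bigcap_p L^p$, this gives a density in $\mathcal S(\R^n)$.

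The main obstacle I expect is matching exactly what \cite[Theorem~6.1]{LoosveldtEtAlAC2026} controls. The lower bound must be in terms of the restricted norm on $[0,1]$ of a variable equal in law to $F$, possibly up to a conditioning or projection that does not destroy the law; this is why $\Hone$ was singled out. If the bound is only almost sure, with a non-explicit random factor, I would try to re-derive its deterministic form by checking that the projection onto the complement of the earlier derivatives is dominated by the restriction to the current interval. That domination holds for the Hermite kernel because $(s-\xi)_+^{-\beta}$ vanishes for $s<\xi$.
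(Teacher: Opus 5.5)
Your proposal is correct and follows essentially the paper's route: Gram factorization of $\det\Gamma_{\mathbf Z}$, the Malliavin strong local nondeterminism bound \eqref{eq:SLND} (with self-similarity for the first factor) comparing each conditional factor with $\Delta_j^{2H}\norm{DZ_1^{H,3}}_{\Hone}^2$, Theorem~\ref{thm:negative-main} combined with H\"older's inequality, and then the standard $\mathbb D^\infty$ criterion for a Schwartz density. The differences are minor. The paper states \eqref{eq:SLND} as a stochastic domination rather than your pathwise product inequality, so each factor's inverse moments are bounded separately via Lemma~\ref{lem:stochastic-inverse} before applying H\"older, which your argument already allows. For general non-overlapping increments, the paper integrates the Schwartz density of the values over kernel directions, whereas your observation that discarding coordinates only enlarges the conditional distances gives the slightly stronger inverse-moment bound for the increment vector's own Malliavin determinant.
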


For the quantitative statement, let $0=t_0<t_1<\cdots<t_m$ and write
\[
 \Delta_j:=t_j-t_{j-1},
 \qquad
 X_j:=Z_{t_j}^{H,3}-Z_{t_{j-1}}^{H,3},
 \qquad 1\le j\le m.
\]
Let $p_\pi$ denote the density of $\mathbf X_\pi=(X_1,\ldots,X_m)$. For $\mathbf n=(n_1,\ldots,n_m)\in\mathbb N_0^m$, write
\[
 \partial^{\mathbf n}:=\prod_{j=1}^m\left(\frac{\partial}{\partial x_j}\right)^{n_j},
 \qquad |\mathbf n|:=\sum_{j=1}^m n_j.
\]

\begin{theorem}[Quantitative density bounds]\label{thm:density-bounds}
Let $m\ge1$ and $\mathbf n=(n_1,\ldots,n_m)\in\mathbb N_0^m$. There exist constants $C,c>0$, depending only on $H$, $m$ and $\mathbf n$, such that for every grid $0=t_0<t_1<\cdots<t_m$ and every $x=(x_1,\ldots,x_m)$ satisfying
\[
 x_j\ge2\Delta_j^H,\qquad 1\le j\le m,
\]
one has
\begin{equation}\label{eq:density-derivative-main}
 |\partial^{\mathbf n}p_\pi(x)|
 \le
 C\prod_{j=1}^m\Delta_j^{-H(1+n_j)}
 \exp\left\{-c\sum_{j=1}^m
 \left(\frac{x_j}{\Delta_j^H}\right)^{2/3}\right\}.
\end{equation}
\end{theorem}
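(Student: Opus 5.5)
The plan is to reduce to normalized increments by self-similarity and stationarity, then apply the standard Malliavin integration-by-parts representation of density derivatives, with tail factors coming from third-chaos hypercontractivity.

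\textbf{Step 1 (normalization).} Set $Y_j:=X_j/\Delta_j^H$ and let $q_\pi$ be the density of $\mathbf Y_\pi=(Y_1,\ldots,Y_m)$. By the change of variables $x_j=\Delta_j^H y_j$,
\[
 \partial^{\mathbf n}p_\pi(x)=\prod_{j=1}^m\Delta_j^{-H(1+n_j)}\,(\partial^{\mathbf n}q_\pi)(y),
 \qquad y_j=x_j/\Delta_j^H .
\]
This produces exactly the prefactor in \eqref{eq:density-derivative-main}. It remains to show that $|\partial^{\mathbf n}q_\pi(y)|\le C\exp\{-c\sum_j y_j^{2/3}\}$ for $y_j\ge2$, with $C,c$ independent of the grid. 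By stationarity and self-similarity, each $Y_j$ has the law of $F=Z_1^{H,3}$. Each $Y_j$ is an element of the third chaos with unit variance, so $\|Y_j\|_{k,p}$ is bounded uniformly in the grid.

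\textbf{Step 2 (integration by parts).} I will use the formula from \cite{Nualart2006} adapted to the orthant $\{z_j>y_j\}$:
\[
 \partial^{\mathbf n}q_\pi(y)=(-1)^{|\mathbf n|+m}\,\E\Big[\prod_{j=1}^m\mathbf 1_{\{Y_j>y_j\}}\,H_{(1,\ldots,m)+\mathbf n}(\mathbf Y_\pi;1)\Big],
\]
where $H_\cdot$ are the iterated Skorohod weights. By Hölder's inequality,
\[
 |\partial^{\mathbf n}q_\pi(y)|\le \Pp\Big\{\bigcap_j\{Y_j>y_j\}\Big\}^{1/2}\,\|H\|_{0,2}.
\]
The weight norm is controlled by Meyer's inequalities through $\|(\det\Gamma_{\mathbf Y_\pi})^{-1}\|_{L^p}$ and $\|Y_j\|_{k,p}$ for suitable $k,p$ depending on $\mathbf n$ and $m$.

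\textbf{Step 3 (the tail factor).} The joint tail probability must give the product decay $\exp\{-c\sum_j y_j^{2/3}\}$, and the $Y_j$ are dependent. Hölder's inequality alone only gives $\min_j$-type decay or a weaker average. My first attempt is to replace the product of indicators by a product of localizing functions $\prod_j\phi(Y_j-y_j+1)$, with $\phi$ a smooth cutoff supported on $[0,\infty)$, inside the integration by parts. Derivatives falling on $\phi$ are again bounded by indicators of $\{Y_j>y_j-1\}$. I then apply Hölder across $j$ with exponent $m$:
\[
 \Pp\Big\{\bigcap_j A_j\Big\}\le\prod_j\Pp(A_j)^{1/m},
\]
and use the one-dimensional third-chaos tail $\Pp\{|F|>u\}\le C\exp(-c u^{2/3})$, which follows from hypercontractivity $\|F\|_p\le (p-1)^{3/2}$. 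The loss of the factor $1/m$ and of the $-1$ shift is absorbed by shrinking $c$, using $y_j\ge2$. This is the main reason for the restriction $x_j\ge2\Delta_j^H$.

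\textbf{Step 4 (grid-uniform inverse determinant; main obstacle).} The key estimate is
\[
 \sup_\pi\E[(\det\Gamma_{\mathbf Y_\pi})^{-p}]<\infty .
\]
I plan to factor the Gram determinant of $(DY_1,\ldots,DY_m)$ as a product of squared distances of $DY_j$ from the span of the preceding $DY_i$. The Malliavin strong local nondeterminism of \cite[Theorem~6.1]{LoosveldtEtAlAC2026} bounds each such distance below by the restricted norm $\|DY_j\|$ over the $j$-th interval, up to a constant independent of the grid, provided that result is scale invariant. By stationarity and scaling, that restricted norm has the law of $\|DF\|_{\Hone}$. Then Hölder's inequality and Theorem~\ref{thm:negative-main} give grid-uniform bounds, as in the proof of Theorem~\ref{thm:fdd-main}.

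I expect the delicate point to be checking that the nondeterminism constant is truly scale- and grid-free after normalization. Self-similarity should reduce this to a fixed reference configuration, but the relative interval lengths vary, so I would verify that the cited theorem's constant depends only on $H$ and $m$.
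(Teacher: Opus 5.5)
Your proposal is correct and follows essentially the same route as the paper: normalization by $\Delta_j^H$, the orthant integration-by-parts formula with a grid-uniform $L^2$ bound on the weight (from the Gram factorization, Malliavin strong local nondeterminism, Theorem~\ref{thm:negative-main}, and uniform fixed-chaos Sobolev norms), and the generalized H\"older bound $\Pp\{\bigcap_j A_j\}\le\prod_j\Pp(A_j)^{1/m}$ combined with the one-dimensional third-chaos tail. Two minor points: the smooth localization in Step~3 is unnecessary, because H\"older applied directly to the orthant indicator already gives the product decay $\exp\{-\frac{c}{m}\sum_j y_j^{2/3}\}$; and the worry in Step~4 is moot, because the cited nondeterminism estimate holds with constant one and exact factor $\Delta_j^{2H}$ (indeed $D_rZ_{t_i}^{H,3}=0$ for $r>t_i$, so each distance is at least $\bigl(\int_{t_{j-1}}^{t_j}|D_rX_j|^2\,dr\bigr)^{1/2}$, whose law is that of $\Delta_j^H\norm{DF}_{\Hone}$).
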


\begin{remark}[Symmetry]\label{rem:symmetry}
The map $B\mapsto-B$ preserves the law of the isonormal process and sends $I_3(g)$ to $-I_3(g)$. Hence
\[
 (Z_t^{H,3})_{t\in\R}\overset{\mathrm{law}}=(-Z_t^{H,3})_{t\in\R},
 \qquad\text{so that}\qquad
 p_\pi(-x)=p_\pi(x),\quad x\in\R^m.
\]
Differentiating this identity $\mathbf n$ times gives
\[
 (\partial^{\mathbf n}p_\pi)(-x)
 =(-1)^{|\mathbf n|}(\partial^{\mathbf n}p_\pi)(x),
\]
so that $|(\partial^{\mathbf n}p_\pi)(-x)|=|(\partial^{\mathbf n}p_\pi)(x)|$ and \eqref{eq:density-derivative-main} holds verbatim on the reflected region $x_j\le-2\Delta_j^H$, $1\le j\le m$. Mixed orthants are not covered: the integration by parts of Proposition~\ref{prop:uniform-ibp} produces the indicator of an upper orthant, whose probability is not small when some coordinates are large and negative. The symmetry is specific to odd Hermite orders; the second-chaos Rosenblatt law is not symmetric, so \cite[Theorem~2]{LoosveldtEtAlRosenblatt} has no such extension.
\end{remark}

\section{Directional derivatives and infinite-rank quadratic forms}\label{sec:directional}

We first isolate the deterministic operator structure behind a directional derivative of $F$.

For $u$ initially smooth on $(0,1)$ define
\begin{equation}\label{eq:Ualpha}
 (U_\alpha u)(x)
 :=\int_0^1u(s)(s-x)_+^{-\beta}\,ds,
 \qquad x\in\R.
\end{equation}
For $s\ne t$, the beta-integral identity yields
\begin{equation}\label{eq:betaidentity}
 \int_\R(s-x)_+^{-\beta}(t-x)_+^{-\beta}\,dx
 =b_\alpha|s-t|^{-\alpha},
\end{equation}
where
\[
 b_\alpha=B\left(\frac{1-\alpha}{2},\alpha\right)>0.
\]
Hence, for $u\in L^2(0,1)$,
\begin{equation}\label{eq:U-energy}
 \norm{U_\alpha u}_{L^2(\R)}^2
 =b_\alpha\int_0^1\int_0^1
 u(s)u(t)|s-t|^{-\alpha}\,ds\,dt.
\end{equation}
Since $\alpha<1$, Schur's test shows that $U_\alpha$ extends to a bounded operator from $L^2(0,1)$ to $L^2(\R)$. If
\[
 (R_\alpha u)(s)=\int_0^1|s-t|^{-\alpha}u(t)\,dt,
\]
then
\begin{equation}\label{eq:UstarU}
 U_\alpha^*U_\alpha=b_\alpha R_\alpha.
\end{equation}

\begin{lemma}[Injectivity of $U_\alpha$]\label{lem:Uinjective}
The operator $U_\alpha$ is injective. Consequently,
\[
 \overline{\operatorname{Ran}(U_\alpha^*)}=L^2(0,1).
\]
\end{lemma}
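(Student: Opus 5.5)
Since $U_\alpha^*U_\alpha=b_\alpha R_\alpha$ by \eqref{eq:UstarU}, and $\norm{U_\alpha u}^2=\ip{U_\alpha^*U_\alpha u}{u}$, we have $U_\alpha u=0$ if and only if
\[
\int_0^1\!\!\int_0^1 u(s)u(t)|s-t|^{-\alpha}\,ds\,dt=0 .
\]
The plan is therefore to show that the Riesz-type kernel $|s-t|^{-\alpha}$, $0<\alpha<1$, is strictly positive definite on $L^2(0,1)$. The range statement then follows from the general identity $\overline{\operatorname{Ran}(U_\alpha^*)}=(\ker U_\alpha)^\perp=L^2(0,1)$.

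For strict positivity we use the Fourier representation of the Riesz kernel in dimension one:
\[
|x|^{-\alpha}=\kappa_\alpha\int_\R |\xi|^{\alpha-1}e^{ix\xi}\,d\xi,
\qquad \kappa_\alpha>0,
\]
valid in the sense of tempered distributions since $0<\alpha<1$. Extend $u\in L^2(0,1)$ by zero to $\tilde u\in L^1\cap L^2(\R)$. Then
\[
\int\!\!\int u(s)u(t)|s-t|^{-\alpha}\,ds\,dt=\kappa_\alpha\int_\R |\xi|^{\alpha-1}|\hat{\tilde u}(\xi)|^2\,d\xi .
\]
To justify this exchange rigorously, first take $u$ smooth with compact support in $(0,1)$, where $\hat{\tilde u}$ is Schwartz and the identity is standard. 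Then pass to general $u$ by density, using boundedness of $R_\alpha$ (Schur) on the left. On the right, use Fatou's lemma, or note that both sides are continuous quadratic forms because the weight is locally integrable and $|\hat{\tilde u}|^2\le\norm{u}_{L^1}^2\le\norm{u}_{L^2}^2$ near the origin.

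If the quadratic form vanishes, then $\hat{\tilde u}=0$ almost everywhere, because the weight $|\xi|^{\alpha-1}$ is strictly positive away from $0$. Plancherel then gives $u=0$.

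An alternative route avoids Fourier analysis altogether. Directly, $U_\alpha u(x)=\int_x^1u(s)(s-x)^{-\beta}\,ds$ for $x<1$ is, up to a constant, a right-sided Riemann--Liouville fractional integral of order $1-\beta\in(1/3,1/2)$ applied to $u$. Riemann--Liouville operators are injective on $L^1$: applying the complementary integral of order $\beta$ yields $I^1u=\int_x^1 u$, and if this is zero for all $x$ then $u=0$ almost everywhere. This semigroup argument is probably the cleanest.

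The only delicate point is the justification of the Fourier or semigroup identity for general $L^2$ functions rather than smooth ones. With the semigroup route this reduces to Fubini together with the beta-integral $\int_x^s(s-y)^{-\beta}(y-x)^{\beta-1}dy=B(1-\beta,\beta)$. Fubini applies because $u\in L^1(0,1)$ and the integrands are nonnegative and locally integrable.
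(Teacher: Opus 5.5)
Your proposal is correct, and its main line is the paper's own proof: the Fourier representation of the Riesz energy with the strictly positive weight $|\xi|^{\alpha-1}$ forces $\widehat{\tilde u}=0$, hence $u=0$, and the range statement follows from $\overline{\operatorname{Ran}(U_\alpha^*)}=(\ker U_\alpha)^\perp$; your density argument supplies a justification the paper leaves implicit, and Fatou alone already gives the inequality needed for injectivity. Your alternative right-sided Riemann--Liouville semigroup argument is also valid, and it is the same device the paper uses for Lemma~\ref{lem:ah-nonzero}.
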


\begin{proof}
Extend $u\in L^2(0,1)$ by zero outside $[0,1]$. The Riesz energy admits the Fourier representation
\[
 \int_\R\int_\R
 u(s)u(t)|s-t|^{-\alpha}\,ds\,dt
 =c_\alpha\int_\R|\widehat u(\xi)|^2|\xi|^{\alpha-1}\,d\xi,
\]
with $c_\alpha>0$. Thus the quadratic form of $R_\alpha$ is strictly positive on nonzero $u$. By \eqref{eq:UstarU}, $U_\alpha u=0$ implies $u=0$. The density statement follows from
\[
 \overline{\operatorname{Ran}(U_\alpha^*)}
 =(\ker U_\alpha)^\perp.
\]
\end{proof}

Let $h\in C_c^\infty(0,1)$ be real-valued and define
\begin{equation}\label{eq:ah}
 a_h(s):=\int_0^s h(r)(s-r)^{-\beta}\,dr,
 \qquad 0<s<1.
\end{equation}
Since $\beta<1$, $a_h\in L^\infty(0,1)$.

\begin{lemma}[Injectivity of the fractional integral]\label{lem:ah-nonzero}
If $h\in C_c^\infty(0,1)$ is nonzero, then $a_h$ is nonzero on a set of positive Lebesgue measure.
\end{lemma}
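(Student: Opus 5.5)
The plan is to recognize $a_h$ as a Riemann--Liouville fractional integral and to invert it. Extend $h$ by zero to $(-\infty,1)$; since $\operatorname{supp}h\subset(0,1)$, we can write
\[
 a_h(s)=\int_{-\infty}^s h(r)(s-r)^{-\beta}\,dr=\Gamma(1-\beta)\,(I^{1-\beta}_{0+}h)(s),
\]
where $I^{\gamma}_{0+}$ is the left Riemann--Liouville integral of order $\gamma=1-\beta\in(1/3,1/2)$. Suppose, for contradiction, that $a_h=0$ almost everywhere on $(0,1)$. The argument then consists of applying $I^{\beta}_{0+}$ and using the semigroup property $I^{\beta}_{0+}I^{1-\beta}_{0+}=I^{1}_{0+}$. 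This identity is just Fubini together with the beta-integral
\[
 \int_r^s (s-\sigma)^{\beta-1}(\sigma-r)^{-\beta}\,d\sigma=B(\beta,1-\beta),
\]
and Fubini is justified because $h$ is bounded and both kernels are locally integrable.

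Carrying this out gives
\[
 0=\int_0^s (s-\sigma)^{\beta-1}a_h(\sigma)\,d\sigma
 =B(\beta,1-\beta)\int_0^s h(r)\,dr
\]
for every $s\in(0,1)$. The left side vanishes because $a_h=0$ a.e. and the integral is absolutely convergent, since $a_h\in L^\infty$ and $\beta-1>-1$. Differentiating in $s$, which is legitimate because $h$ is continuous, yields $h\equiv0$ on $(0,1)$, a contradiction. Hence $\{a_h\neq0\}$ has positive Lebesgue measure.

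An alternative route is the Titchmarsh convolution theorem: $a_h=h*k$ on $(0,1)$ with $k(x)=x_+^{-\beta}$, and $k$ is supported exactly at $0$. The direct inversion above is more elementary, so I would use that.

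I expect no real obstacle. The only point needing care is the Fubini/Tonelli justification. Tonelli applied to $|h|$ gives finiteness, since $\int_0^s\int_r^s(s-\sigma)^{\beta-1}(\sigma-r)^{-\beta}\,d\sigma\,|h(r)|\,dr=B(\beta,1-\beta)\int_0^s|h|<\infty$. The identity $\int_r^s(s-\sigma)^{\beta-1}(\sigma-r)^{-\beta}d\sigma=B(\beta,1-\beta)$ itself follows from the substitution $\sigma=r+(s-r)\tau$.
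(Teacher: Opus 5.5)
Your proposal is correct and is essentially the paper's own proof. Both identify $a_h=\Gamma(1-\beta)\,I^{1-\beta}_{0+}h$, apply $I^{\beta}_{0+}$, and use the semigroup property to obtain $\int_0^s h(r)\,dr=0$, hence $h=0$. Your Tonelli argument and the beta-integral computation spell out the semigroup step that the paper only cites.
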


\begin{proof}
Up to the positive normalizing factor $\Gamma(1-\beta)$,
\[
 a_h=I_{0+}^{1-\beta}h,
\]
where $I_{0+}^{\gamma}$ denotes the Riemann--Liouville fractional integral of order $\gamma$. If $a_h=0$ almost everywhere, applying $I_{0+}^{\beta}$ and using the semigroup property gives
\[
 I_{0+}^{1}h=0.
\]
Hence $\int_0^s h(r)\,dr=0$ for almost every $s$, and therefore $h=0$, a contradiction.
\end{proof}

The directional Malliavin derivative along $h$ is
\[
 D_hF:=\ip{DF}{h}_{L^2([0,1])}.
\]
For $r\in(0,1)$, the $L^2(\R^2)$-norm of the slice $f_\alpha(\cdot,\cdot,r)$ satisfies
\[
 \norm{f_\alpha(\cdot,\cdot,r)}_2^2
 =c_H^2b_\alpha^2
 \int_r^1\int_r^1
 (s-r)^{-\beta}(t-r)^{-\beta}|s-t|^{-2\alpha}\,ds\,dt.
\]
After the scaling $s-r=(1-r)u$, $t-r=(1-r)v$, the right-hand side is bounded by
\[
 C_{\alpha,H}(1-r)^{1-3\alpha},
\]
because $\beta<1$, $2\alpha<1$, and $\beta+\alpha<1$. Hence
\[
 \int_0^1 |h(r)|\,
 \norm{f_\alpha(\cdot,\cdot,r)}_2\,dr<\infty.
\]
Thus the integral below is a Bochner integral in $L^2(\R^2)$, and the $L^2$-continuity of $I_2$ yields
\begin{equation}\label{eq:DhF}
 D_hF=3I_2(k_h),
\end{equation}
where
\begin{equation}\label{eq:kh}
 k_h(y,z)
 =c_H\int_0^1
 a_h(s)(s-y)_+^{-\beta}(s-z)_+^{-\beta}\,ds.
\end{equation}
Moreover,
\[
 \norm{k_h}_2^2
 =c_H^2b_\alpha^2
 \int_0^1\int_0^1
 a_h(s)a_h(t)|s-t|^{-2\alpha}\,ds\,dt<\infty,
\]
since $a_h\in L^\infty(0,1)$ and $2\alpha<1$. Let $A_h$ denote the Hilbert--Schmidt operator on $L^2(\R)$ associated with $k_h$.

\begin{lemma}[Operator factorization]\label{lem:factorization}
For every real $h\in C_c^\infty(0,1)$,
\begin{equation}\label{eq:factorization}
 A_h=c_HU_\alpha M_{a_h}U_\alpha^*,
\end{equation}
where $M_{a_h}$ is multiplication by $a_h$ on $L^2(0,1)$.
\end{lemma}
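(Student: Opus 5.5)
The plan is to verify \eqref{eq:factorization} by computing the kernel of the right-hand side and comparing it with \eqref{eq:kh}. First I will identify the adjoint $U_\alpha^*:L^2(\R)\to L^2(0,1)$. For $g\in L^2(\R)$ with compact support and $u\in C_c^\infty(0,1)$, Fubini gives
\[
 \ip{U_\alpha u}{g}_{L^2(\R)}=\int_0^1u(s)\int_\R (s-x)_+^{-\beta}g(x)\,dx\,ds .
\]
Hence
\[
 (U_\alpha^*g)(s)=\int_\R(s-x)_+^{-\beta}g(x)\,dx=\int_{-\infty}^s(s-x)^{-\beta}g(x)\,dx .
\]
For a general $g\in L^2(\R)$ the integral still converges absolutely for a.e.\ $s$. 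This follows because the $L^2$ bound from Schur's test for $U_\alpha$ gives $\norm{U_\alpha^*g}\le\norm{U_\alpha}\norm{g}$, and the same Fubini argument applied to $|g|$ shows the positive kernel integral is finite.

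Next I will compute $\ip{A_hg}{\phi}$ for $g,\phi\in L^2(\R)$. I will first take $g,\phi\in C_c(\R)$, which is enough by density and the boundedness of both sides.
\begin{enumerate}
\item The left side is $\int\int k_h(y,z)g(z)\phi(y)\,dz\,dy$.
\item Insert \eqref{eq:kh} and use Fubini to move the $s$-integral outside. This gives
\[
 c_H\int_0^1 a_h(s)\,(U_\alpha^*\phi)(s)\,(U_\alpha^*g)(s)\,ds .
\]
\item This expression equals $c_H\ip{M_{a_h}U_\alpha^*g}{U_\alpha^*\phi}_{L^2(0,1)}=\ip{c_HU_\alpha M_{a_h}U_\alpha^*g}{\phi}$.
\end{enumerate}
The right-hand operator is bounded because $a_h\in L^\infty(0,1)$. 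Both sides are therefore bounded operators that agree on a dense set of pairs, so they coincide.

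The main obstacle is justifying Fubini in step 2, that is, proving
\[
 \int_0^1|a_h(s)|\int\int(s-y)_+^{-\beta}(s-z)_+^{-\beta}|g(z)||\phi(y)|\,dy\,dz\,ds<\infty .
\]
I will bound this by $\norm{a_h}_\infty\int_0^1(U_\alpha^*|\phi|)(s)(U_\alpha^*|g|)(s)\,ds$ and then apply Cauchy--Schwarz together with the boundedness of $U_\alpha^*$ on nonnegative functions, which is the Schur-test bound for $U_\alpha$ applied to $|g|$ and $|\phi|$. This also shows that $k_h(y,z)$ defined by \eqref{eq:kh} is the kernel of the bounded operator $c_HU_\alpha M_{a_h}U_\alpha^*$. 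Since that kernel is the $L^2(\R^2)$ function computed earlier, the identification with the Hilbert--Schmidt operator $A_h$ holds as stated.
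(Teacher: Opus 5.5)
Your proposal is correct and follows essentially the paper's argument: you compute $\langle A_h g,\phi\rangle$ by Fubini on a dense class of compactly supported test functions, which gives $c_H\int_0^1 a_h\,(U_\alpha^*g)(U_\alpha^*\phi)\,ds$; you recognize this as the bounded form of $c_HU_\alpha M_{a_h}U_\alpha^*$; and you extend by density. The only difference is how absolute integrability is justified: you use Cauchy--Schwarz together with the $L^2$-boundedness of the positive-kernel adjoint applied to $|g|$ and $|\phi|$, while the paper uses the local bound $\sup_{s\in(0,1)}\int_K|s-x|^{-\beta}\,dx<\infty$ for compactly supported test functions, and both justifications work.
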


\begin{proof}
Take first $v,w\in C_c^\infty(\R)$. For every compact set $K\subset\R$,
\[
 \sup_{s\in(0,1)}\int_K |s-x|^{-\beta}\,dx<\infty
\]
because $\beta<1$. Thus the expressions defining $U_\alpha^*v$ and $U_\alpha^*w$ are pointwise finite, and the integrand below is absolutely integrable. By \eqref{eq:kh} and ordinary Fubini,
$$
 \ip{A_hv}{w}_{L^2(\R)}
 =c_H\int_0^1a_h(s)
 \left(\int_\R(s-y)_+^{-\beta}v(y)\,dy\right)
 \times
 \left(\int_\R(s-z)_+^{-\beta}w(z)\,dz\right)ds
 $$
 $$
 =c_H\int_0^1a_h(s)\,
 (U_\alpha^*v)(s)(U_\alpha^*w)(s)\,ds.
$$
Because $a_h\in L^\infty(0,1)$ and $U_\alpha$ is bounded, the right-hand side is the bounded bilinear form associated with
\[
 c_HU_\alpha M_{a_h}U_\alpha^*.
\]
Since $C_c^\infty(\R)$ is dense in $L^2(\R)$, the identity extends to all $v,w\in L^2(\R)$.
\end{proof}

\begin{proposition}[Infinite rank for smooth compactly supported directions]\label{prop:infinite-rank}
If $0\ne h\in C_c^\infty(0,1)$, then the operator $A_h$ has infinite rank. Equivalently, the quadratic Gaussian form $D_hF$ has infinite rank.
\end{proposition}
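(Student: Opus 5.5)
The plan is to use the factorization of Lemma~\ref{lem:factorization}, $A_h=c_HU_\alpha M_{a_h}U_\alpha^*$, and to show that the rank of $A_h$ equals the rank of the multiplication operator $M_{a_h}$. That rank is infinite as soon as $a_h$ is nonzero on a set of positive measure.

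\textbf{Step 1: $M_{a_h}$ has infinite rank.} By Lemma~\ref{lem:ah-nonzero}, the set $E=\{s\in(0,1):a_h(s)\ne0\}$ has positive Lebesgue measure. Choose infinitely many pairwise disjoint measurable subsets $E_1,E_2,\ldots$ of $E$ with positive measure; this is possible because Lebesgue measure is nonatomic. The functions $\mathbf 1_{E_k}$ are linearly independent, and so are their images $M_{a_h}\mathbf 1_{E_k}=a_h\mathbf 1_{E_k}$, since these are nonzero and have disjoint supports. Hence $\operatorname{Ran}M_{a_h}$ is infinite-dimensional.

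\textbf{Step 2: transfer the rank through $U_\alpha^*$ and $U_\alpha$.} By Lemma~\ref{lem:Uinjective}, $U_\alpha$ is injective, so $\dim\operatorname{Ran}(U_\alpha M_{a_h}U_\alpha^*)=\dim\operatorname{Ran}(M_{a_h}U_\alpha^*)$. Let $V=\operatorname{Ran}U_\alpha^*$. Lemma~\ref{lem:Uinjective} also gives that $V$ is dense in $L^2(0,1)$. Since $M_{a_h}$ is bounded (because $a_h\in L^\infty$), $M_{a_h}(V)$ is dense in $M_{a_h}(L^2(0,1))$. The latter contains the infinite linearly independent family from Step~1.

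If $M_{a_h}(V)$ were finite-dimensional, it would be closed, and density would force $M_{a_h}(L^2(0,1))\subset M_{a_h}(V)$. That contradicts Step~1. Therefore $M_{a_h}U_\alpha^*$, and hence $A_h$, has infinite rank.

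\textbf{Step 3: equivalence with the quadratic form.} Since $D_hF=3I_2(k_h)$ by \eqref{eq:DhF}, the standard spectral representation applies. Writing $k_h=\sum_j\lambda_j e_j\otimes e_j$ with $(\lambda_j)$ the eigenvalues of the self-adjoint Hilbert--Schmidt operator $A_h$ ($k_h$ is symmetric), we get
\[
 D_hF=3\sum_j\lambda_j\big(B(e_j)^2-1\big).
\]
Infinite rank of $A_h$ means infinitely many $\lambda_j\ne0$, which is exactly infinite rank of the quadratic form.

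\textbf{Main obstacle.} The one delicate point is Step~2: the range of $U_\alpha^*$ is only dense, not closed, so one cannot simply restrict $M_{a_h}$ to it. The closure argument above, that a finite-dimensional dense subspace of an infinite-dimensional space is impossible, handles this cleanly, provided one keeps track that density of $V$ transfers through the bounded operator $M_{a_h}$.
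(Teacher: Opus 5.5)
Your proposal is correct and follows the paper's proof essentially step for step. You first get infinite rank of $M_{a_h}$ from disjoint positive-measure subsets of the support of $a_h$; the paper uses $\{|a_h|\ge\delta\}$ and you use $\{a_h\ne0\}$, which works equally well. You then transfer infinite rank to $A_h$ via injectivity of $U_\alpha$, density of $\operatorname{Ran}(U_\alpha^*)$, and closedness of finite-dimensional subspaces, and your Step~3 spells out the equivalence that the paper records in Remark~\ref{rem:quadratic}.
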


\begin{proof}
By Lemma~\ref{lem:ah-nonzero}, $a_h\ne0$ on a set of positive measure. Hence there exists $\delta>0$ such that
\[
 E_\delta:=\{s\in(0,1):|a_h(s)|\ge\delta\}
\]
has positive measure. Since Lebesgue measure is non-atomic, $E_\delta$ contains infinitely many pairwise disjoint measurable subsets of positive measure. Their indicator functions are mapped by $M_{a_h}$ to linearly independent functions. Thus $M_{a_h}$ has infinite rank.

Suppose that $A_h$ had finite rank. Since $U_\alpha$ is injective and
\[
 A_h=c_HU_\alpha M_{a_h}U_\alpha^*,
\]
the space $M_{a_h}\operatorname{Ran}(U_\alpha^*)$ would be finite-dimensional and hence closed. By Lemma~\ref{lem:Uinjective},
\[
 \overline{\operatorname{Ran}(U_\alpha^*)}=L^2(0,1).
\]
Because $M_{a_h}$ is continuous,
\[
 M_{a_h}(L^2(0,1))
 =M_{a_h}\big(\overline{\operatorname{Ran}(U_\alpha^*)}\big)
 \subseteq
 \overline{M_{a_h}\operatorname{Ran}(U_\alpha^*)}
 =M_{a_h}\operatorname{Ran}(U_\alpha^*).
\]
Hence $M_{a_h}$ would have finite rank, a contradiction.
\end{proof}

\begin{remark}\label{rem:quadratic}
Let $k$ be a symmetric Hilbert--Schmidt kernel and let $(\lambda_j)$ be the eigenvalues of its associated Hilbert--Schmidt operator. Then
\[
 I_2(k)\overset{\mathrm{law}}=
 \sum_{j\ge1}\lambda_j(\xi_j^2-1),
\]
where $(\xi_j)$ are independent standard Gaussian variables; see, for example, \cite[Proposition~2.7.13]{NourdinPeccati2012}. Hence the rank of the quadratic Gaussian form is exactly the rank of the associated Hilbert--Schmidt operator.
\end{remark}

\section{Small balls and all negative moments}\label{sec:negative}

We now derive the boundedness of the joint density of finitely many directional derivatives directly from their characteristic function. This avoids any uniformity issue hidden in a qualitative density theorem and yields an explicit bound.

\begin{proposition}[Bounded joint density of directional derivatives]\label{prop:bounded-density}
Fix an integer $m\ge1$ and choose real orthonormal functions
\[
 h_1,\ldots,h_m\in C_c^\infty(0,1).
\]
Set
\[
 Q_i:=D_{h_i}F,\qquad 1\le i\le m.
\]
Then $(Q_1,\ldots,Q_m)$ has a bounded continuous density $\rho_m$ on $\R^m$.

More precisely, let $N>2m$, and for $u=(u_1,\ldots,u_m)\in\mathbb S^{m-1}$ write
\[
 h_u:=\sum_{i=1}^m u_i h_i.
\]
If $s_N(A_{h_u})$ denotes the $N$-th singular value of $A_{h_u}$, then
\[
 c_{m,N}:=\min_{u\in\mathbb S^{m-1}}s_N(A_{h_u})>0,
\]
and
\begin{equation}\label{eq:rho-bound}
 \norm{\rho_m}_\infty
 \le
 \frac{\pi^{m/2}}{(2\pi)^m(6c_{m,N})^m}
 \frac{\Gamma(N/4-m/2)}{\Gamma(N/4)}.
\end{equation}
\end{proposition}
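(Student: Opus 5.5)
Write $Q=(Q_1,\ldots,Q_m)$ and use linearity of $h\mapsto D_hF$: for $\theta=tu$ with $t\ge0$ and $u\in\mathbb S^{m-1}$,
\[
\ip{\theta}{Q}=tD_{h_u}F=3t\,I_2(k_{h_u}).
\]
The plan is to bound the characteristic function $\varphi(\theta)=\E[e^{i\ip{\theta}{Q}}]$ by an integrable function of $|\theta|$ that is uniform in the direction $u$. Fourier inversion then gives a bounded continuous density with $\norm{\rho_m}_\infty\le(2\pi)^{-m}\int_{\R^m}|\varphi|$.

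\textbf{Step 1: second-chaos characteristic function.} By Remark~\ref{rem:quadratic}, $I_2(k)\overset{\mathrm{law}}=\sum_j\lambda_j(\xi_j^2-1)$, where $\lambda_j$ are the eigenvalues of the self-adjoint Hilbert--Schmidt operator $A_k$. Since $|s_j|=|\lambda_j|$ for self-adjoint operators, the singular values may be used. From
\[
|\E e^{i\tau\lambda(\xi^2-1)}|=(1+4\tau^2\lambda^2)^{-1/4}
\]
we get, keeping only the $N$ largest singular values (each factor is at most $1$),
\[
|\varphi(tu)|\le\prod_{j=1}^N\bigl(1+36t^2s_j(A_{h_u})^2\bigr)^{-1/4}\le\bigl(1+36t^2c_{m,N}^2\bigr)^{-N/4},
\]
using $s_j\ge s_N\ge c_{m,N}$ for $j\le N$.

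\textbf{Step 2: the minimum is positive.} The map $u\mapsto h_u\mapsto k_{h_u}$ is linear, and $\norm{k_h}_2\le C\norm{a_h}_\infty\le C'\norm{h}_{L^1}$, which is finite-dimensional and hence continuous. So $u\mapsto A_{h_u}$ is continuous into the Hilbert--Schmidt operators, hence in operator norm. Weyl's inequality $|s_N(A)-s_N(B)|\le\norm{A-B}$ makes $u\mapsto s_N(A_{h_u})$ continuous. For each $u$, $h_u\neq0$ by orthonormality, so $A_{h_u}$ has infinite rank by Proposition~\ref{prop:infinite-rank}, and $s_N(A_{h_u})>0$. Compactness of $\mathbb S^{m-1}$ then gives $c_{m,N}>0$. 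This is the crux of the argument, but it is already supplied by Section~\ref{sec:directional}.

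\textbf{Step 3: integrate in polar coordinates.} We have
\[
\int_{\R^m}|\varphi|\le|\mathbb S^{m-1}|\int_0^\infty t^{m-1}(1+36c^2t^2)^{-N/4}\,dt.
\]
Substitute $s=6ct$ and use the Beta integral
\[
\int_0^\infty s^{m-1}(1+s^2)^{-N/4}\,ds=\tfrac12 B\bigl(\tfrac m2,\tfrac N4-\tfrac m2\bigr).
\]
This is finite precisely when $N>2m$. Together with $|\mathbb S^{m-1}|=2\pi^{m/2}/\Gamma(m/2)$, the factor $\Gamma(m/2)$ cancels and we obtain
\[
\int_{\R^m}|\varphi|\le\frac{\pi^{m/2}}{(6c)^m}\,\frac{\Gamma(N/4-m/2)}{\Gamma(N/4)}.
\]
Dividing by $(2\pi)^m$ gives \eqref{eq:rho-bound}, and integrability of $\varphi$ also yields continuity of $\rho_m$.

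The only delicate points are the constant bookkeeping and the operator-norm continuity in $u$ used in Step 2.
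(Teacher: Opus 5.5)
Your proposal is correct and follows the paper's proof essentially step for step: continuity of $u\mapsto A_{h_u}$ together with infinite rank and compactness of $\mathbb S^{m-1}$ gives $c_{m,N}>0$, the second-chaos characteristic function is truncated to $N$ factors, and the bound is finished by a polar/Beta-integral evaluation. One inessential slip: $\norm{a_h}_\infty\le C'\norm{h}_{L^1}$ is false, since the kernel $(s-r)^{-\beta}$ is unbounded (the valid bound is in terms of $\norm{h}_\infty$). This does not matter, because $u\mapsto A_{h_u}=\sum_i u_iA_{h_i}$ is linear on $\R^m$ and hence automatically continuous.
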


\begin{proof}
The map
\[
 u\longmapsto A_{h_u}=\sum_{i=1}^m u_iA_{h_i}
\]
is continuous from $\mathbb S^{m-1}$ to the Hilbert--Schmidt operators, hence also in operator norm. Singular values are Lipschitz with respect to the operator norm. By Proposition~\ref{prop:infinite-rank}, $A_{h_u}$ has infinite rank for every $u\in\mathbb S^{m-1}$, so $s_N(A_{h_u})>0$. Compactness of $\mathbb S^{m-1}$ therefore gives $c_{m,N}>0$.

Let
\[
 \varphi_m(t)
 :=\E\exp\left(i\sum_{j=1}^m t_jQ_j\right),
 \qquad t\in\R^m.
\]
For $t\ne0$, write $t=ru$ with $r=|t|$ and $u\in\mathbb S^{m-1}$. Let $(\lambda_j(u))_{j\ge1}$ be the eigenvalues of the self-adjoint Hilbert--Schmidt operator $A_{h_u}$, repeated with multiplicity and ordered so that $|\lambda_1(u)|\ge|\lambda_2(u)|\ge\cdots$. Then Remark~\ref{rem:quadratic} gives
\[
 \sum_{j=1}^m t_jQ_j
 =3r I_2(k_{h_u})
 \overset{\mathrm{law}}=
 3r\sum_{j\ge1}\lambda_j(u)(\xi_j^2-1).
\]
Consequently,
\begin{equation}\label{eq:cf-product}
 |\varphi_m(t)|
 =
 \prod_{j\ge1}
 \bigl(1+36r^2\lambda_j(u)^2\bigr)^{-1/4}.
\end{equation}
Keeping only the first $N$ factors and using
$|\lambda_j(u)|\ge s_N(A_{h_u})\ge c_{m,N}$ for $1\le j\le N$, we obtain
\begin{equation}\label{eq:cf-decay}
 |\varphi_m(t)|
 \le
 \bigl(1+36c_{m,N}^2|t|^2\bigr)^{-N/4}.
\end{equation}
Since $N>2m$, the right-hand side is integrable on $\R^m$. Fourier inversion therefore gives a bounded continuous density and
\[
 \norm{\rho_m}_\infty
 \le
 (2\pi)^{-m}
 \int_{\R^m}
 \bigl(1+36c_{m,N}^2|t|^2\bigr)^{-N/4}\,dt.
\]
The standard beta-integral evaluation of the last integral gives
\eqref{eq:rho-bound}.
\end{proof}

\begin{remark}\label{rem:Bogachev-alternative}
The preceding proposition is also consistent with the general result
\cite[Section~5]{Bogachev2016}: measurable Gaussian quadratic forms whose
nontrivial linear combinations have infinite rank possess a joint density in
$\mathcal S(\R^m)$.  We do not use that theorem in the proof of the small-ball
estimate, because Proposition~\ref{prop:bounded-density} makes the required
uniformity over directions explicit.
\end{remark}

\begin{proposition}[Arbitrarily high small-ball powers]\label{prop:small-ball}
For every integer $m\ge1$ there exists $C_m<\infty$ such that
\[
 \Pp\{\norm{DF}_{\Hone}\le\varepsilon\}
 \le C_m\varepsilon^m,
 \qquad \varepsilon>0.
\]
\end{proposition}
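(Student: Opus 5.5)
The plan is to compare the Malliavin norm with the vector of directional derivatives and then use the bounded joint density of Proposition~\ref{prop:bounded-density}. Fix $m\ge1$ and choose real orthonormal functions $h_1,\ldots,h_m\in C_c^\infty(0,1)$. Such a family exists, for instance by applying Gram--Schmidt to $m$ smooth bumps with disjoint supports. Set $Q_i=D_{h_i}F=\ip{DF}{h_i}_{\Hone}$. Bessel's inequality in $\Hone=L^2([0,1])$ gives
\[
 \sum_{i=1}^m Q_i^2\le \norm{DF}_{\Hone}^2
\]
almost surely. Hence
\[
 \{\norm{DF}_{\Hone}\le\varepsilon\}\subseteq\{|(Q_1,\ldots,Q_m)|\le\varepsilon\}.
\]
To make this rigorous, I will note that $DF\in L^2(\Omega;L^2(\R))$, so its restriction to $[0,1]$ lies in $\Hone$ almost surely. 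I will also check that the pairing $\ip{DF}{h_i}$ agrees almost surely with $3I_2(k_{h_i})$ as defined in \eqref{eq:DhF}. This follows from the Bochner-integral identification already made before \eqref{eq:DhF}, by Fubini for multiple integrals.

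Next, by Proposition~\ref{prop:bounded-density}, the vector $(Q_1,\ldots,Q_m)$ has a density $\rho_m$ with $\norm{\rho_m}_\infty<\infty$. Therefore
\[
 \Pp\{\norm{DF}_{\Hone}\le\varepsilon\}\le \norm{\rho_m}_\infty\,\mathrm{vol}(B_\varepsilon^m)=\norm{\rho_m}_\infty\frac{\pi^{m/2}}{\Gamma(m/2+1)}\varepsilon^m .
\]
This holds for every $\varepsilon>0$, so we may take $C_m$ equal to the constant on the right-hand side. For large $\varepsilon$ the bound is trivial in any case, since probabilities are at most $1$.

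Finally, the negative moments in Theorem~\ref{thm:negative-main} follow by the layer-cake formula:
\[
 \E\norm{DF}_{\Hone}^{-p}=\int_0^\infty p\,\varepsilon^{-p-1}\Pp\{\norm{DF}_{\Hone}\le\varepsilon\}\,d\varepsilon .
\]
On $(0,1)$ we use the small-ball bound with some $m>p$. On $[1,\infty)$ we bound the probability by $1$. Both pieces are then finite. In particular this shows $\norm{DF}_{\Hone}>0$ almost surely.

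There is no serious obstacle, since all the analytic work sits in Proposition~\ref{prop:bounded-density}. The only point needing care is the identification of $\ip{DF}{h}_{\Hone}$ with $3I_2(k_h)$ as random variables, not merely in law. That identification is what allows the deterministic Bessel inequality to be combined with the joint law of the $Q_i$.
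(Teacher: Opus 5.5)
Your proof is correct and takes essentially the same approach as the paper: Bessel's inequality for an orthonormal family $h_1,\ldots,h_m\in C_c^\infty(0,1)$ reduces the event to a small ball for $(Q_1,\ldots,Q_m)$, and the bounded density from Proposition~\ref{prop:bounded-density} then gives the bound $\norm{\rho_m}_\infty\,\omega_m\varepsilon^m$. Your layer-cake step also matches the paper's proof of Theorem~\ref{thm:negative-main}; the Gram--Schmidt step is unnecessary, since disjointly supported bumps are already orthogonal and only need normalizing.
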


\begin{proof}
Choose real orthonormal functions
\[
 h_1,\ldots,h_m\in C_c^\infty(0,1)
\]
and set $Q_i=D_{h_i}F$. By Proposition~\ref{prop:bounded-density}, the vector
$(Q_1,\ldots,Q_m)$ has a bounded density $\rho_m$. Bessel's inequality gives
\[
 \sum_{i=1}^mQ_i^2\le\norm{DF}_{\Hone}^2.
\]
Hence
\begin{align*}
 \Pp\{\norm{DF}_{\Hone}\le\varepsilon\}
 &\le
 \Pp\left\{\sum_{i=1}^mQ_i^2\le\varepsilon^2\right\}\\
 &\le
 \norm{\rho_m}_\infty\,\omega_m\varepsilon^m,
\end{align*}
where $\omega_m$ is the volume of the unit ball in $\R^m$.
\end{proof}

\begin{proof}[Proof of Theorem~\ref{thm:negative-main}]
The small-ball estimate is Proposition~\ref{prop:small-ball}. Fix $p>0$ and choose an integer $m>p$. With $X=\norm{DF}_{\Hone}$, the layer-cake formula gives
\[
 \E[X^{-p}]
 =p\int_0^\infty t^{p-1}\Pp\{X<t^{-1}\}\,dt.
\]
The integral over $(0,1]$ is finite. On $[1,\infty)$, Proposition~\ref{prop:small-ball} gives an integrand bounded by $C_mt^{p-1-m}$, which is integrable because $m>p$.
\end{proof}

\section{Finite-dimensional nondegeneracy}\label{sec:fdd}

We now use the two structural results from \cite{LoosveldtEtAlAC2026}. First, let
\[
 \mathbf X=(X_1,\ldots,X_n)\in(\mathbb D^{1,2})^n
\]
and let $\Gamma_{\mathbf X}$ be its Malliavin matrix. Then \cite[Lemma~4.1]{LoosveldtEtAlAC2026} gives the Gram-factorization
\begin{equation}\label{eq:GramFactor}
 \det\Gamma_{\mathbf X}
 =\norm{DX_1}_{\Hh}^2
 \prod_{j=2}^n
 \norm{DX_j-\operatorname{proj}_{E_{j-1}}(DX_j)}_{\Hh}^2,
\end{equation}
where $E_{j-1}=\operatorname{span}\{DX_1,\ldots,DX_{j-1}\}$.

Second, for a Hermite process of arbitrary order, \cite[Theorem~6.1]{LoosveldtEtAlAC2026} establishes the Malliavin strong local nondeterminism estimate. For $0=t_0<t_1<\cdots<t_j$,
\begin{equation}\label{eq:SLND}
 \norm{DZ_{t_j}^{H,3}-\operatorname{proj}_{E_{j-1}}(DZ_{t_j}^{H,3})}_{\Hh}^2
 \lawge
 (t_j-t_{j-1})^{2H}\norm{DZ_1^{H,3}}_{\Hone}^2.
\end{equation}
Here $X\lawge Y$ means first-order stochastic domination:
\[
 \E[\varphi(X)]\ge\E[\varphi(Y)]
\]
for every bounded non-decreasing $\varphi:\R_+\to\R$.

We first record the elementary inverse-moment consequence of this order.

\begin{lemma}[Stochastic domination and inverse moments]\label{lem:stochastic-inverse}
If $X,Y>0$ and $X\lawge cY$ for some $c>0$, then, for every $r>0$,
\[
 \E[X^{-r}]\le c^{-r}\E[Y^{-r}]
\]
whenever the right-hand side is finite.
\end{lemma}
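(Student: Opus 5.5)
The plan is to apply the definition of $\lawge$ to a suitable bounded non-decreasing test function and then pass to the limit by monotone convergence. First, I will reduce to $c=1$. Since $X\lawge cY$ compares $X$ with the random variable $cY$, and $\E[(cY)^{-r}]=c^{-r}\E[Y^{-r}]$, it suffices to prove the following: if $X,W>0$ and $X\lawge W$, then $\E[X^{-r}]\le\E[W^{-r}]$. Here we set $W=cY$.

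The map $x\mapsto x^{-r}$ is decreasing and unbounded near $0$, so it cannot be used directly. For $M>0$, I will instead consider
\[
 \varphi_M(x):=-\min\{x^{-r},M\},\qquad x>0,
\]
extended, for instance, by $\varphi_M(0):=-M$. This function is bounded, with values in $[-M,0)$, and it is non-decreasing on $\R_+$, because $\min\{x^{-r},M\}$ is non-increasing. The stochastic domination hypothesis then gives $\E[\varphi_M(X)]\ge\E[\varphi_M(W)]$, that is,
\[
 \E\big[\min\{X^{-r},M\}\big]\le\E\big[\min\{W^{-r},M\}\big]\le\E[W^{-r}].
\]

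Next, let $M\uparrow\infty$. Since $X>0$ almost surely, $\min\{X^{-r},M\}\uparrow X^{-r}$ pointwise. Monotone convergence therefore yields $\E[X^{-r}]\le\E[W^{-r}]=c^{-r}\E[Y^{-r}]$, which is finite by assumption. There is no genuine obstacle here. The only point needing care is the truncation: the definition of $\lawge$ is stated only for bounded non-decreasing functions, so a decreasing unbounded power has to be handled by negating and capping it in this way.
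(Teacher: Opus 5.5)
Your proposal is correct and follows essentially the same argument as the paper. Both truncate with $\min\{x^{-r},M\}$, apply the domination hypothesis to its negative as a bounded non-decreasing test function, and let $M\to\infty$ by monotone convergence.
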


\begin{proof}
For $M>0$, let
\[
 g_M(x)=\min\{x^{-r},M\}.
\]
Then $-g_M$ is bounded and non-decreasing. Stochastic domination yields
\[
 \E[g_M(X)]\le\E[g_M(cY)].
\]
Let $M\to\infty$ and apply monotone convergence.
\end{proof}

\begin{proof}[Proof of Theorem~\ref{thm:fdd-main}]
Reorder the times so that
\[
 0<t_1<\cdots<t_n.
\]
Set $\mathbf Z=(Z_{t_1}^{H,3},\ldots,Z_{t_n}^{H,3})$. By \eqref{eq:GramFactor},
\[
 \det\Gamma_{\mathbf Z}
 =Y_1\cdots Y_n,
\]
where
\[
 Y_1=\norm{DZ_{t_1}^{H,3}}_{\Hh}^2
\]
and, for $j\ge2$,
\[
 Y_j=\norm{DZ_{t_j}^{H,3}-\operatorname{proj}_{E_{j-1}}(DZ_{t_j}^{H,3})}_{\Hh}^2.
\]

The Malliavin self-similarity established in \cite[Proposition~5.1]{LoosveldtEtAlAC2026}, together with $\norm{DZ_1^{H,3}}_{\Hh}\ge\norm{DZ_1^{H,3}}_{\Hone}$, shows that $Y_1$ has negative moments of every order by Theorem~\ref{thm:negative-main}. For $j\ge2$, \eqref{eq:SLND}, Lemma~\ref{lem:stochastic-inverse}, and Theorem~\ref{thm:negative-main} imply
\[
 \E[Y_j^{-r}]<\infty
 \qquad\text{for every }r>0.
\]
Therefore, for fixed $p>0$, H\"older's inequality gives
\[
 \E[(\det\Gamma_{\mathbf Z})^{-p}]
 =\E\left[\prod_{j=1}^nY_j^{-p}\right]
 \le\prod_{j=1}^n\E[Y_j^{-pn}]^{1/n}<\infty.
\]
This proves \eqref{eq:det-negative}.

Each $Z_{t_j}^{H,3}$ belongs to a fixed Wiener chaos and therefore to $\mathbb D^\infty$ and to $L^r$ for every $r\ge1$. The standard Malliavin nondegeneracy criterion then yields a density in the Schwartz space; see, for instance, \cite[Proposition~2.1.5]{Nualart2006}, or the formulation recalled in \cite[Theorem~3]{LoosveldtEtAlRosenblatt}.

For successive increments
\[
 (Z_{t_1}^{H,3},Z_{t_2}^{H,3}-Z_{t_1}^{H,3},\ldots,Z_{t_n}^{H,3}-Z_{t_{n-1}}^{H,3}),
\]
the conclusion follows from the result for values by an invertible triangular linear transformation.

More generally, let
\[
 (Z_{b_1}^{H,3}-Z_{a_1}^{H,3},\ldots,
  Z_{b_r}^{H,3}-Z_{a_r}^{H,3})
\]
be a vector of pairwise non-overlapping increments. Collect all distinct positive endpoints among the $a_i,b_i$ and order them as
\[
 0<s_1<\cdots<s_M.
\]
The vector of increments is a surjective linear image of
$(Z_{s_1}^{H,3},\ldots,Z_{s_M}^{H,3})$. After a linear change of coordinates, the density of such an image is obtained by integrating a Schwartz density over the kernel variables. Partial integration preserves the Schwartz class. Hence every finite vector of pairwise non-overlapping increments has a density in $\mathcal S$.
\end{proof}

\section{Quantitative estimates for increment densities}\label{sec:quantitative}

We now strengthen Theorem~\ref{thm:fdd-main} by keeping track of the dependence on the time grid. The normalization of successive increments makes the argument particularly transparent.

Fix $m\ge1$ and a grid $0=t_0<t_1<\cdots<t_m$. Put $\Delta_j=t_j-t_{j-1}$ and
\[
 X_j:=Z_{t_j}^{H,3}-Z_{t_{j-1}}^{H,3},
 \qquad
 \widehat X_j:=\Delta_j^{-H}X_j,
 \qquad
 \widehat{\mathbf X}_\pi=(\widehat X_1,\ldots,\widehat X_m).
\]
Each $\widehat X_j$ belongs to the third Wiener chaos, has variance one, and has the same one-dimensional law as $Z_1^{H,3}$.

\begin{proposition}[Uniform inverse-determinant Sobolev bounds]\label{prop:uniform-det}
For all integers $k\ge0$, $m\ge1$ and every $p\ge1$, there exists $C=C(H,m,k,p)<\infty$ such that
\begin{equation}\label{eq:normalized-det-Sobolev}
 \sup_{0=t_0<\cdots<t_m}
 \big\|(\det\Gamma_{\widehat{\mathbf X}_\pi})^{-1}\big\|_{k,p}
 \le C.
\end{equation}
Consequently,
\begin{equation}\label{eq:unnormalized-det-Sobolev}
 \big\|(\det\Gamma_{\mathbf X_\pi})^{-1}\big\|_{k,p}
 \le
 C\prod_{j=1}^m\Delta_j^{-2H}.
\end{equation}
\end{proposition}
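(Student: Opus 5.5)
The plan is to treat $G:=(\det\Gamma_{\widehat{\mathbf X}_\pi})^{-1}$ through the usual formula for Malliavin derivatives of an inverse: $G\in\mathbb D^{k,p}$ as soon as $\det\Gamma$ has uniformly bounded Sobolev norms and uniformly bounded negative moments of every order. Write $\Gamma:=\Gamma_{\widehat{\mathbf X}_\pi}$. By the chain rule for $x\mapsto x^{-1}$ applied to a smooth truncation $\varphi_\delta(\det\Gamma)$ with $\delta\downarrow0$, the derivative $D^jG$ ($j\le k$) is a finite sum of terms of the form
\[
 (\det\Gamma)^{-1-\ell}\, D^{j_1}(\det\Gamma)\otimes\cdots\otimes D^{j_\ell}(\det\Gamma),
 \qquad j_1+\cdots+j_\ell=j .
\]
H\"older's inequality then bounds $\norm{G}_{k,p}$ by a polynomial in
\[
 \E\big[(\det\Gamma)^{-q}\big]
 \quad\text{and}\quad
 \big\|\det\Gamma\big\|_{k,q}
\]
for some finite $q=q(k,p)$. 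It therefore suffices to bound these two quantities uniformly over grids.

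\textbf{Positive Sobolev norms.} The quantity $\det\Gamma$ is a polynomial of degree $m$ in the entries $\ip{D\widehat X_i}{D\widehat X_j}_{\Hh}$. Each $\widehat X_i=I_3(g_i)$ lies in the third chaos with $\norm{g_i}=1/\sqrt6$, the fixed normalization (variance one). Hence each $D^a\widehat X_i$ is a finite-chaos object whose norms are controlled by $\norm{g_i}$ alone, and they vanish for $a\ge4$. Hypercontractivity gives all $L^q$ norms of $\norm{D^a\widehat X_i}$ bounded by constants depending only on $q$. The Leibniz rule then yields $\norm{\det\Gamma}_{k,q}\le C(m,k,q)$, independently of the grid.

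\textbf{Negative moments.} This is the main point. Apply the Gram factorization \eqref{eq:GramFactor} to $\widehat{\mathbf X}_\pi$. Since the increments are triangular combinations of the values $Z_{t_j}$, the span $E_{j-1}$ generated by $D\widehat X_1,\dots,D\widehat X_{j-1}$ coincides with that generated by $DZ_{t_1},\dots,DZ_{t_{j-1}}$. Moreover, $DX_j-\operatorname{proj}_{E_{j-1}}DX_j=DZ_{t_j}-\operatorname{proj}_{E_{j-1}}DZ_{t_j}$. Hence the $j$-th factor equals
\[
 \Delta_j^{-2H}\,\norm{DZ_{t_j}-\operatorname{proj}_{E_{j-1}}DZ_{t_j}}_{\Hh}^2
 \;\lawge\; \norm{DZ_1^{H,3}}_{\Hone}^2
\]
by \eqref{eq:SLND}. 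The first factor is handled by self-similarity, exactly as in the proof of Theorem~\ref{thm:fdd-main}. Lemma~\ref{lem:stochastic-inverse} with $c=1$ and Theorem~\ref{thm:negative-main} bound $\E[Y_j^{-r}]$ by $\E[\norm{DZ_1}_{\Hone}^{-2r}]$, which is grid-free. H\"older's inequality across the $m$ factors then gives $\sup_\pi\E[(\det\Gamma)^{-q}]<\infty$.

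\textbf{Main obstacle.} The step I expect to be hardest is making sure the constant in \eqref{eq:SLND} is genuinely uniform in the grid, i.e.\ that \cite[Theorem~6.1]{LoosveldtEtAlAC2026} holds with constant $1$ for arbitrary spacings. If it held only with a spacing-dependent constant, I would fall back on stationarity of increments and scaling to reduce to a normalized configuration. I would also verify the truncation argument that justifies $G\in\mathbb D^{k,p}$, namely that the derivative formula passes to the limit $\delta\to0$ using the negative moments.

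\textbf{Unnormalized bound.} Since $\mathbf X_\pi=\operatorname{diag}(\Delta_j^{H})\widehat{\mathbf X}_\pi$, we have
\[
 \det\Gamma_{\mathbf X_\pi}=\prod_j\Delta_j^{2H}\det\Gamma_{\widehat{\mathbf X}_\pi}.
\]
Hence $(\det\Gamma_{\mathbf X_\pi})^{-1}$ is a deterministic multiple of $G$, and \eqref{eq:unnormalized-det-Sobolev} follows from \eqref{eq:normalized-det-Sobolev}.
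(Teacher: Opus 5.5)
Your proposal is correct and follows essentially the same route as the paper: grid-uniform negative moments of $\det\Gamma_{\widehat{\mathbf X}_\pi}$ come from the Gram factorization, the triangular identification of the residuals with those of $DZ_{t_j}^{H,3}$, Malliavin SLND (with self-similarity for the first factor), Lemma~\ref{lem:stochastic-inverse} with $c=1$, Theorem~\ref{thm:negative-main} and H\"older; the positive Sobolev bounds come from fixed-chaos hypercontractivity and the Leibniz rule; the expansion of $D^\ell(\det\Gamma)^{-1}$ combined with H\"older gives the normalized bound; and diagonal rescaling gives the unnormalized one. The uniformity concern you flag does not arise in the paper's argument, because \eqref{eq:SLND} is invoked exactly as stated, with no spacing-dependent constant.
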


\begin{proof}
Let $\widetilde E_{j-1}$ be the span of $DX_1,\ldots,DX_{j-1}$. Since the transformation from values $(Z_{t_1},\ldots,Z_{t_{j-1}})$ to successive increments $(X_1,\ldots,X_{j-1})$ is triangular and invertible,
\[
 \widetilde E_{j-1}
 =\operatorname{span}\{DZ_{t_1}^{H,3},\ldots,DZ_{t_{j-1}}^{H,3}\}.
\]
Moreover, $DZ_{t_{j-1}}^{H,3}\in\widetilde E_{j-1}$, and hence
\[
 DX_j-\operatorname{proj}_{\widetilde E_{j-1}}DX_j
 =DZ_{t_j}^{H,3}-\operatorname{proj}_{\widetilde E_{j-1}}DZ_{t_j}^{H,3}.
\]
The Gram factorization and the Malliavin strong local nondeterminism estimate therefore imply, after normalization by $\Delta_j^H$, that each conditional factor in $\det\Gamma_{\widehat{\mathbf X}_\pi}$ stochastically dominates a copy of
$\norm{DZ_1^{H,3}}_{\Hone}^2$. The first factor is handled by Malliavin self-similarity in the same way. Theorem~\ref{thm:negative-main}, Lemma~\ref{lem:stochastic-inverse}, and H\"older's inequality yield
\begin{equation}\label{eq:uniform-det-Lp}
 \sup_{\pi}
 \E[(\det\Gamma_{\widehat{\mathbf X}_\pi})^{-r}]<\infty
 \qquad\text{for every }r>0.
\end{equation}

For positive Sobolev norms, write $\widehat X_j=I_3(g_j)$ with $g_j\in\Hh^{\odot3}$. Then $3!\,\|g_j\|_{\Hh^{\otimes3}}^2=\E[\widehat X_j^2]=1$, and for $\ell\le3$ the derivative $D^\ell\widehat X_j=\frac{3!}{(3-\ell)!}I_{3-\ell}(g_j)$ is an $\Hh^{\otimes\ell}$-valued element of the Wiener chaos of order $3-\ell$, while $D^\ell\widehat X_j=0$ for $\ell\ge4$. Consequently, for $0\le\ell\le3$,
\[
 \E\big[\|D^\ell\widehat X_j\|_{\Hh^{\otimes\ell}}^2\big]
 =\left(\frac{3!}{(3-\ell)!}\right)^2(3-\ell)!\,
 \|g_j\|_{\Hh^{\otimes3}}^2
 =\frac{3!}{(3-\ell)!},
\]
which depends neither on $j$ nor on the grid. Let $p\ge2$. For almost every $r=(r_1,\ldots,r_\ell)\in\R^\ell$ the variable $D_r^\ell\widehat X_j$ is a scalar element of the Wiener chaos of order $3-\ell$, so the hypercontractivity estimate $\|I_q(f)\|_{L^p(\Omega)}\le(p-1)^{q/2}\|I_q(f)\|_{L^2(\Omega)}$ (see \cite[Theorem~2.7.2]{NourdinPeccati2012} or \cite[Section~1.4.3]{Nualart2006}) applies to it. Combining this with Minkowski's integral inequality in $L^{p/2}(\Omega)$ gives
\begin{align*}
 \Big(\E\big[\|D^\ell\widehat X_j\|_{\Hh^{\otimes\ell}}^p\big]\Big)^{2/p}
 &=\Big\|\int_{\R^\ell}|D_r^\ell\widehat X_j|^2\,dr\Big\|_{L^{p/2}(\Omega)}
 \le\int_{\R^\ell}\big\|D_r^\ell\widehat X_j\big\|_{L^p(\Omega)}^2\,dr\\
 &\le(p-1)^{3-\ell}
 \int_{\R^\ell}\big\|D_r^\ell\widehat X_j\big\|_{L^2(\Omega)}^2\,dr
 =(p-1)^{3-\ell}\,\frac{3!}{(3-\ell)!}.
\end{align*}
Only the scalar hypercontractivity estimate has been used. For $p<2$ one uses monotonicity of $L^p$ norms. Hence, for all integers $r\ge1$ and all $p\ge1$,
\[
 \sup_{\pi}\max_{1\le j\le m}
 \|\widehat X_j\|_{r,p}\le C(r,p)<\infty,
\]
with $C(r,p)$ independent of the grid. Hence every entry of the Malliavin matrix of $\widehat{\mathbf X}_\pi$, its determinant, and all their Malliavin derivatives have moments of every order uniformly in the grid. Iterating the chain and product rules from $D(\det\Gamma)^{-1}=-(\det\Gamma)^{-2}D(\det\Gamma)$, one obtains by induction on $\ell$ that $D^\ell(\det\Gamma)^{-1}$ is a finite linear combination of terms
\[
 (\det\Gamma)^{-1-i}\,
 D^{a_1}(\det\Gamma)\otimes\cdots\otimes D^{a_i}(\det\Gamma),
 \qquad 1\le i\le\ell,\quad a_1+\cdots+a_i=\ell;
\]
compare \cite[Lemma~2.1.6]{Nualart2006}. Combining these terms with \eqref{eq:uniform-det-Lp} by H\"older's inequality proves \eqref{eq:normalized-det-Sobolev}.

Finally,
\[
 \Gamma_{\mathbf X_\pi}
 =\operatorname{diag}(\Delta_1^H,\ldots,\Delta_m^H)
 \Gamma_{\widehat{\mathbf X}_\pi}
 \operatorname{diag}(\Delta_1^H,\ldots,\Delta_m^H),
\]
so
\[
 (\det\Gamma_{\mathbf X_\pi})^{-1}
 =\left(\prod_{j=1}^m\Delta_j^{-2H}\right)
 (\det\Gamma_{\widehat{\mathbf X}_\pi})^{-1}.
\]
The deterministic prefactor commutes with Malliavin differentiation, and \eqref{eq:unnormalized-det-Sobolev} follows.
\end{proof}

\begin{proposition}[Uniform integration-by-parts weights]\label{prop:uniform-ibp}
Let $\mathbf n=(n_1,\ldots,n_m)\in\mathbb N_0^m$. For every grid $\pi$ there exists $H_{\pi,\mathbf n}\in\mathbb D^\infty$ such that the density $\widehat p_\pi$ of $\widehat{\mathbf X}_\pi$ satisfies
\begin{equation}\label{eq:density-IBP-representation}
 \partial^{\mathbf n}\widehat p_\pi(y)
 =(-1)^{|\mathbf n|}
 \E\left[\mathbf 1_{\{\widehat X_1\ge y_1,\ldots,\widehat X_m\ge y_m\}}
 H_{\pi,\mathbf n}\right],
\end{equation}
and
\begin{equation}\label{eq:H-uniform}
 \sup_{\pi}\|H_{\pi,\mathbf n}\|_2<\infty.
\end{equation}
\end{proposition}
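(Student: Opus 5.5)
We construct the weight by the standard iterated Malliavin integration by parts, as in \cite[Proposition~2.1.4 and its proof]{Nualart2006}. Write $\widehat\Gamma:=\Gamma_{\widehat{\mathbf X}_\pi}$ and $\widehat\gamma:=\widehat\Gamma^{-1}$. For a random variable $G\in\mathbb D^\infty$ and an index $i\in\{1,\ldots,m\}$, set
\[
 H_{(i)}(G):=\sum_{j=1}^m\delta\big(G\,\widehat\gamma_{ij}\,D\widehat X_j\big).
\]
Define the multi-indexed weights recursively: $H_{\emptyset}:=1$, and $H_{(i_1,\ldots,i_\ell)}:=H_{(i_\ell)}\big(H_{(i_1,\ldots,i_{\ell-1})}\big)$. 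For $\varphi\in C_c^\infty(\R^m)$, the chain rule $D\varphi(\widehat{\mathbf X}_\pi)=\sum_j\partial_j\varphi\,D\widehat X_j$ and invertibility of $\widehat\Gamma$ give
\[
 \E[\partial_i\varphi(\widehat{\mathbf X}_\pi)G]=\E[\varphi(\widehat{\mathbf X}_\pi)H_{(i)}(G)].
\]
We apply this to the multi-index $\alpha$ consisting of $\mathbf n+(1,\ldots,1)$, that is, $n_j+1$ copies of each $j$. Then we approximate the indicator $\mathbf 1_{[y,\infty)}$ through $\varphi$ with $\partial_1\cdots\partial_m\varphi\to\delta_y$. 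The standard argument, namely Fubini with $\varphi(x)=\prod_j\mathbf 1_{\{x_j\ge y_j\}}$ taken as the $m$-fold antiderivative, yields
\[
 \partial^{\mathbf n}\widehat p_\pi(y)=(-1)^{|\mathbf n|}\E\big[\mathbf 1_{\{\widehat{\mathbf X}_\pi\ge y\}}H_\alpha\big].
\]
The sign convention is to be fixed so that it matches \eqref{eq:density-IBP-representation}; any extra $(-1)^m$ is absorbed into $H_{\pi,\mathbf n}:=\pm H_\alpha$. Existence of the density and membership $H_\alpha\in\mathbb D^\infty$ follow from Theorem~\ref{thm:fdd-main}, applied to increments, together with Proposition~\ref{prop:uniform-det}.

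The substance of the proof is the grid-uniform bound \eqref{eq:H-uniform}. We use Meyer's inequality in the form $\|\delta(u)\|_{k,p}\le c_{k,p}\|u\|_{k+1,p}$, with constants independent of the grid since they depend only on $k$ and $p$. Combined with H\"older's inequality for Sobolev norms of products, $\|GK\|_{k,p}\le c\|G\|_{k,p_1}\|K\|_{k,p_2}$, this gives by induction on $\ell=|\alpha|$
\[
 \|H_{(i_1,\ldots,i_\ell)}\|_{k,p}\le c\,\|H_{(i_1,\ldots,i_{\ell-1})}\|_{k+1,p_1}\,\max_{i,j}\|\widehat\gamma_{ij}\|_{k+1,p_2}\,\max_j\|\widehat X_j\|_{k+2,p_3}.
\]
It therefore suffices to bound $\sup_\pi\|\widehat\gamma_{ij}\|_{k,p}$ and $\sup_\pi\|\widehat X_j\|_{k,p}$. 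The latter bound was already obtained in the proof of Proposition~\ref{prop:uniform-det}. For the former, write $\widehat\gamma_{ij}=(\det\widehat\Gamma)^{-1}\operatorname{cof}_{ji}(\widehat\Gamma)$. The cofactor is a polynomial in the entries $\ip{D\widehat X_a}{D\widehat X_b}$, whose Sobolev norms are uniformly bounded by the same hypercontractivity estimates. The factor $(\det\widehat\Gamma)^{-1}$ is uniformly bounded in every $\|\cdot\|_{k,p}$ by \eqref{eq:normalized-det-Sobolev}. Taking $k=0$ and $p=2$ at the end of the induction yields \eqref{eq:H-uniform}.

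The main obstacle is bookkeeping rather than any new idea: the constants must be shown to depend only on $(H,m,\mathbf n)$, never on the grid. This holds because every grid-dependent quantity enters only through Proposition~\ref{prop:uniform-det} and the uniform chaos bounds. The limiting argument from smooth $\varphi$ to the indicator also needs care, as does justifying $\partial^{\mathbf n}\widehat p_\pi$ pointwise. We would follow \cite[Proposition~2.1.5]{Nualart2006}: continuity of $y\mapsto\E[\mathbf 1_{\{\widehat{\mathbf X}_\pi\ge y\}}H_\alpha]$ follows from absolute continuity of the law, and this continuity identifies the representation as the classical derivative.
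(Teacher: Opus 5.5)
Your proposal is correct and follows the paper's route. You build the iterated Malliavin integration-by-parts weights of \cite[Propositions~2.1.4--2.1.5]{Nualart2006} for the multi-index $\mathbf n+(1,\ldots,1)$, and you get grid-uniformity from Meyer's inequalities, Proposition~\ref{prop:uniform-det} and the uniform fixed-chaos Sobolev bounds. The only difference is that you carry out the Meyer/H\"older induction explicitly through the cofactor formula for $\widehat\gamma$, where the paper instead cites \cite[Lemma~3.6]{BoufoussiNachit2023}; also, with upper-orthant indicators no extra $(-1)^m$ actually arises, so your sign hedge is unnecessary.
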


\begin{proof}
The standard iterative Malliavin integration-by-parts formula for a nondegenerate vector gives \eqref{eq:density-IBP-representation}; see, for example, \cite[Propositions~2.1.4--2.1.5]{Nualart2006}. The corresponding weights are obtained recursively from the inverse Malliavin matrix, Malliavin derivatives of the vector, and the divergence operator. Meyer's inequalities therefore bound their $L^2$ norms by finitely many Sobolev norms of $(\det\Gamma_{\widehat{\mathbf X}_\pi})^{-1}$ and of the components $\widehat X_j$; an explicit bound of this type, with $|\mathbf n|+m$ in place of the number of integrations by parts, is \cite[Lemma~3.6]{BoufoussiNachit2023}, as used in the proof of \cite[Proposition~2]{LoosveldtEtAlRosenblatt}. Proposition~\ref{prop:uniform-det} and the uniform fixed-chaos Sobolev bounds used in its proof yield \eqref{eq:H-uniform}.
\end{proof}

\begin{proof}[Proof of Theorem~\ref{thm:density-bounds}]
By the fixed-chaos tail estimate, there exists $c_3>0$ such that for every third-chaos random variable $Y$ and every $u\ge2$,
\[
 \Pp\left\{\frac{|Y|}{(\E Y^2)^{1/2}}\ge u\right\}
 \le \exp(-c_3u^{2/3});
\]
see, for instance, \cite[Lemma~1]{LoosveldtEtAlRosenblatt}. Since each $\widehat X_j$ has variance one, H\"older's inequality gives, for $y_j\ge2$,
$$
 \Pp\{\widehat X_j\ge y_j,\ 1\le j\le m\}
 \le
 \prod_{j=1}^m\Pp\{|\widehat X_j|\ge y_j\}^{1/m}\\
 \le
 \exp\left\{-\frac{c_3}{m}
 \sum_{j=1}^m y_j^{2/3}\right\}.
$$
Combining this estimate with \eqref{eq:density-IBP-representation}, \eqref{eq:H-uniform}, and Cauchy--Schwarz yields
\begin{equation}\label{eq:normalized-density-bound}
 |\partial^{\mathbf n}\widehat p_\pi(y)|
 \le C
 \exp\left\{-c\sum_{j=1}^m y_j^{2/3}\right\},
 \qquad y_j\ge2,
\end{equation}
with constants independent of the grid.

Finally, the exact scaling relation
\[
 p_\pi(x)
 =\left(\prod_{j=1}^m\Delta_j^{-H}\right)
 \widehat p_\pi\left(
 \frac{x_1}{\Delta_1^H},\ldots,
 \frac{x_m}{\Delta_m^H}\right)
\]
implies
\[
 \partial^{\mathbf n}p_\pi(x)
 =\left(\prod_{j=1}^m\Delta_j^{-H(1+n_j)}\right)
 (\partial^{\mathbf n}\widehat p_\pi)\left(
 \frac{x_1}{\Delta_1^H},\ldots,
 \frac{x_m}{\Delta_m^H}\right).
\]
Applying \eqref{eq:normalized-density-bound} proves \eqref{eq:density-derivative-main}.
\end{proof}

\begin{remark}
For the Rosenblatt process the corresponding fixed-chaos tail exponent is $2/q=1$, and the analogue of \eqref{eq:density-derivative-main} is genuinely exponential; see \cite[Theorem~2]{LoosveldtEtAlRosenblatt}. At Hermite order three the natural exponent is $2/3$. Proposition~\ref{prop:uniform-det} shows that the inverse-determinant Sobolev scaling in the mesh sizes is nevertheless identical: the only change in the final density estimate is the tail profile of the underlying chaos.
\end{remark}

\section{Why the argument is specific to Hermite order three}\label{sec:order-three}

The proof above exploits a feature that is exact at the third Hermite order. If $Z^{H,q}$ is a Hermite process of order $q$ and $h$ is deterministic, then
\[
 D_hZ_1^{H,q}\in\mathcal H_{q-1}.
\]
For $q=2$, this is the first Wiener chaos, so the directional derivative is Gaussian. This is the setting exploited in the Rosenblatt smoothness argument of \cite{LoosveldtEtAlRosenblatt}. For $q=3$, the directional derivative lies in the second Wiener chaos and can be treated as a Gaussian quadratic form; the self-contained Fourier argument of Section~\ref{sec:negative} then yields arbitrarily high small-ball powers. For $q\ge4$, the directional derivative has Gaussian polynomial degree at least three, and the quadratic-form theorem no longer applies directly.

It is worth comparing this with regularization results for general random variables in the third Wiener chaos. Poly \cite{Poly2019} studies regularization along convergence to a nondegenerate Gaussian law. In the third chaos he improves the Carbery--Wright small-ball exponent by a factor three by introducing a Malliavin gradient whose law is encoded through the spectrum of a Gaussian matrix. This still yields a fixed polynomial nondegeneracy exponent and does not provide negative moments of arbitrary order for a fixed non-Gaussian target. The present setting is different: $F=Z_1^{H,3}$ is a fixed non-central Hermite limit, and the specific integral structure of its kernel implies that every nonzero directional derivative from $C_c^\infty(0,1)$ is an infinite-rank second-chaos form. Compactness over finite-dimensional spheres then upgrades this pointwise infinite-rank property to the arbitrarily high small-ball exponents in Proposition~\ref{prop:small-ball}.

For comparison, negative moments of Malliavin derivatives of general second-chaos variables are available by \cite[Lemma~7.1]{HuLuNualart2014}; this fact is also emphasized in \cite[Remark~1]{LoosveldtEtAlRosenblatt}. Thus the difficulty addressed here is genuinely the passage from a third-chaos target to a sufficiently nondegenerate family of its second-chaos directional derivatives.

For $q\ge4$, one might try to differentiate repeatedly until reaching the second chaos. This does not directly solve the small-ball problem for the first Malliavin derivative: Bessel's inequality controls $\norm{DZ_1^{H,q}}$ from below by first directional derivatives, whereas higher directional derivatives introduce additional random coefficients and do not provide a deterministic lower bound for $\norm{DZ_1^{H,q}}$. Moreover, general fractional smoothness results for polynomial images, such as those surveyed in \cite[Section~5]{Bogachev2016}, yield only finite Besov regularity and do not provide the bounded joint densities in arbitrarily large dimension that are used in Proposition~\ref{prop:small-ball}. This identifies the precise obstruction to extending the present argument beyond order three.

\begin{remark}
The argument proves considerably more than almost sure positivity of the Malliavin derivative at time one. Estimate \eqref{eq:small-ball-main} holds with an arbitrarily large polynomial exponent. In this sense the order-three Hermite derivative is strongly nondegenerate, despite being non-Gaussian and belonging to the second chaos after directional differentiation.
\end{remark}

\end{document}